\newtheorem{theorem}{Theorem}[section]
\newtheorem{lemma}[theorem]{Lemma}
\newtheorem{proposition}[theorem]{Proposition}
\newtheorem{corollary}{Corollary}
\theoremstyle{definition}
\renewcommand{\leq}{\leqslant}
\renewcommand{\geq}{\geqslant}
\def\R{\mathbb{R}}
\def\Z{\mathbb{Z}}
\def\N{\mathbb{N}}
\numberwithin{equation}{section}
\begin{document}

\title[On suprema of convolutions on discrete cubes]{
On suprema of convolutions on discrete cubes 
}





\author[J. Gaitan]{Jos\'e Gaitan}
\address[JG]{Department of Mathematics, Virginia Polytechnic Institute and State University,  225 Stanger Street, Blacksburg, VA 24061-1026, USA
}
\email{jogaitan@vt.edu}

\author[J. Madrid]{Jos\'e Madrid}
\address[JM]{Department of Mathematics, Virginia Polytechnic Institute and State University,  225 Stanger Street, Blacksburg, VA 24061-1026, USA
}
\email{josemadrid@vt.edu}

\subjclass[2020]{39A12, 26D15, 11B30, 11B13, }
\keywords{Convolutions; hypercube; Sidon sets; optimal bounds.}

\maketitle
\begin{abstract}

We find the optimal constant $C$ such that 
\begin{equation*}
\|f_1*f_2*\dots*f_{k}\|_{\infty}\geq C\prod_{i=1}^{k}\|f_i\|_1
\end{equation*}
for functions $f_i:\{0,1\}^d\to\R$. As applications, we derive bounds for Sidon sets on hypercubes, and, we also obtain bounds for the continuous analogue problem.


\end{abstract}

\section{Introduction}
For subsets $A_1, A_2, \dots A_n$ of an additive group $G$, we define the sumset
$$
A_1+A_2\dots+A_n:=\{a_1+a_2+\dots+a_n;\ a_i\in A_i\ \text{for all}\ i\},
$$
and $kA:=A_1+A_2+\dots+A_k$, $A_i=A$ for $1\leq i\leq k$.

A set $A\subset G$ is called $g-$Sidon set of order $k$, if
\begin{equation}
|\{(a_1,a_2,\dots,a_k)\in A^k; a_1+a_2+\dots+a_k=a\}|\leq g
\end{equation}
for all $a\in kA$.

Since the results by Cilleruelo, Ruzsa \& Vinuesa \cite{CRV} relating optimal bounds for the size of Sidon sets to optimal bounds for suprema of convolutions.
These problems have attracted the attention of many authors. Specifically, the problem of finding the best constant $C$ such that
$$
\|f*f\|_{\infty}\geq C\|f\|^2_{1}
$$
for all nonnegative functions $f:\R\to\R$ supported on $(-1/4,1/4)$, has been studied in \cite{CRT}, \cite{CS}, \cite{G},\cite{MO1},\cite{MV} and \cite{Y}. The current best known bounds are
\begin{equation}
1.28<C<1.51.
\end{equation}
The lower bound was established by Cloninger and Steinerberger \cite{CS}, the upper bound was previously observed by Matolcsi \& Vinuesa in \cite{MV} using numerical analysis for step functions. It was also observed by Matolcsi \& Vinuesa \cite[Note 4.1]{MV} that 
$$
C=\lim_{m\to\infty} 2(m+1)\bar C_{2,m}, \ \text{and}\ \ C\leq 2(m+1)\bar C_{2,m} \ \text{for all}\ m\geq 1,
$$
where, for any $k,m\in\N$, $\bar C_{k,m}$ denotes the best constant such that
\begin{equation*}
\|\underbrace{f*f*\dots*f}_{k}\|_{\infty}\geq\bar C_{k,m}\|f\|^k_1
\end{equation*}
for all $f:\{0,1,\dots,m\}\to\R$. We also denote by $C_{k,m}$ the best constant such that
\begin{equation*}
\|{f_1*f_2*\dots*f_k}\|_{\infty}\geq\ C_{k,m}\prod_{i=1}^k\|f_i\|_1
\end{equation*}
for all functions $f_i:\{0,1,\dots,m\}\to\R$, $1\leq i\leq k$.



\section{Main results and convolution inequalities on $\{0,1\}^d$}

Our main result gives a precise formula for $C_{k,1}$.

\begin{theorem}\label{thm: main theorem}
Let $k\in\N, k\geq 2$, then
\begin{align*}
C_{k,1}=\bar C_{k,1}&={{k}\choose{\lfloor k/2 \rfloor}} \left(\frac{\lfloor \frac{k+1}{2}\rfloor \lceil \frac{k+1}{2}\rceil}{(\lfloor \frac{k+1}{2}\rfloor +\lceil \frac{k+1}{2}\rceil)^2}\right)^{\frac{k}{2}}\\
&=\begin{cases}
\frac{{{k}\choose{\lfloor k/2 \rfloor}}}{2^{k}}, & \text{if}\ k\ \text{odd},\\
\frac{{{k}\choose{\lfloor k/2 \rfloor}}}{2^k}\left(1-\frac{1}{(k+1)^2}\right)^{\frac{k}{2}},  & \text{if}\ k\ \text{even}.
\end{cases}
  \end{align*}
\end{theorem}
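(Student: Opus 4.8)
The plan is to pass to generating functions. The claimed inequality is invariant under $f_i\mapsto\lambda_i f_i$, so after rescaling we may assume $\|f_i\|_1=1$; working (as the convolution/Sidon setting requires) with nonnegative $f_i$, write $a_i=f_i(0)\ge 0$ and $b_i=f_i(1)\ge 0$ with $a_i+b_i=1$. Encoding $f_i$ by the affine polynomial $a_i+b_ix$ turns convolution into multiplication, so that
$$\sum_{j=0}^{k}c_j x^j:=P(x)=\prod_{i=1}^{k}(a_i+b_ix),\qquad c_j=(f_1*\cdots*f_k)(j).$$
Hence $c_j=\Pr[X=j]$, where $X=\sum_{i=1}^{k}Y_i$ is a sum of independent Bernoulli variables with $\Pr[Y_i=1]=b_i$, and $\sum_j c_j=1$. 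The problem becomes: over all $b=(b_1,\dots,b_k)\in[0,1]^k$, minimize the largest atom $M(b):=\max_j\Pr[X=j]$, and show $\min_b M(b)=V$, the right-hand side of the theorem, attained when all $b_i$ are equal.

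For the upper bound I exhibit this homogeneous minimizer ($f_1=\cdots=f_k=f$), which simultaneously gives $\bar C_{k,1}\le V$. Here $c_j=\binom{k}{j}a^{k-j}b^j$ with $a+b=1$. For $k$ odd I take $a=b=1/2$, so the two central coefficients $c_{(k\pm1)/2}$ tie at $\binom{k}{\lfloor k/2\rfloor}/2^k$. For $k$ even I set $b/a=(k+2)/k$, i.e.\ $a=\tfrac{k}{2(k+1)}$, $b=\tfrac{k+2}{2(k+1)}$; since $c_{j+1}/c_j=\tfrac{k-j}{j+1}\cdot\tfrac{b}{a}$, the maximum is attained exactly at $j=k/2$ and $j=k/2+1$, where $c_{k/2}=c_{k/2+1}=\binom{k}{k/2}2^{-k}\big(1-(k+1)^{-2}\big)^{k/2}=V$. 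This is a routine one-variable computation once the balancing value of $b/a$ is located.

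The heart of the matter is the lower bound $M(b)\ge V$ for all $b$. The key structural input is that $P$ is real-rooted (each factor is), so $(c_j)$ is log-concave and in fact obeys Newton's inequalities $\big(c_j/\binom kj\big)^2\ge\big(c_{j-1}/\binom{k}{j-1}\big)\big(c_{j+1}/\binom{k}{j+1}\big)$, with equality throughout exactly when all roots coincide, i.e.\ when all $b_i$ are equal. I would prove that $M$ is \emph{Schur-convex} on $[0,1]^k$: spreading the parameters apart in the majorization order only concentrates the distribution and raises its largest atom, while the most balanced vectors minimize it. Granting this, for each fixed value of $\sum_i b_i$ the minimum of $M$ sits at the equal vector, and minimizing over the single remaining parameter reduces everything to the binomial family already analyzed above; together with the trivial $\bar C_{k,1}\ge C_{k,1}$ this forces $C_{k,1}=\bar C_{k,1}=V$.

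Establishing the Schur-convexity of $M$ is the main obstacle. The natural route is the two-coordinate Schur criterion: freeze $b_3,\dots,b_k$, write $s=b_1+b_2$, $p=b_1b_2$, and let $E_\ell$ denote the coefficient sequence of $\prod_{i\ge 3}(a_i+b_ix)$, so that $c_j\propto E_j+sE_{j-1}+pE_{j-2}$. One must then control the sign of $(b_1-b_2)(\partial_{b_1}-\partial_{b_2})M$ at the index realizing the maximum. Moving $b_1,b_2$ together at fixed $s$ increases $p$, and since the second difference $E_j-2E_{j-1}+E_{j-2}$ is negative near the mode of the log-concave sequence $E_\ell$ (where the maximal atom of $P$ lives), this adjustment pushes the peak down. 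Making this rigorous at the argmax—particularly when the maximum is shared by two central coefficients and when the mode of $E$ is offset from the center—is the delicate point; alternatively one may invoke known Schur-convexity results for the modal probability of Poisson-binomial laws. Once the reduction to equal parameters is secured, the remaining optimization over the single ratio $b/a$ is elementary and pins down the stated constant.
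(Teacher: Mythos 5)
Your setup (generating polynomials, Poisson--binomial reformulation, the extremizers $a=b=1/2$ for odd $k$ and $b/a=(k+2)/k$ for even $k$) matches the paper, and your upper-bound computation is correct and coincides with the paper's diagonal Lemma. But the lower bound --- the entire content of the theorem --- rests on the assertion that $M(b)=\max_j\Pr[X=j]$ is Schur-convex, and you neither prove this nor point to a verifiable reference; ``one may invoke known Schur-convexity results for the modal probability of Poisson-binomial laws'' is not something you can lean on, as no such standard result is cited or, to my knowledge, readily available. Your sketched two-coordinate argument has a concrete flaw: writing $c_j=(1-s+p)E_j+(s-2p)E_{j-1}+pE_{j-2}$, the derivative $\partial c_j/\partial p$ is the second difference $E_j-2E_{j-1}+E_{j-2}$, and you claim this is negative ``near the mode of the log-concave sequence $E_\ell$.'' Log-concavity does not imply concavity near the mode (e.g.\ the log-concave sequence $0.01,\,0.1,\,1,\,0.1$ has positive second difference at its modal term), so the sign you need is not established; moreover the index realizing $\max_j c_j$ need not sit where $E$ is concave, and you do not handle the nonsmoothness of the max when it is attained at two tied indices --- precisely the configuration at the optimum. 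You flag these as ``the delicate point,'' but that delicate point is the theorem.

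For comparison, the paper avoids Schur-convexity altogether. It first shows (via monotonicity of the likelihood ratios $f_{k,i}/f_{k,i-1}$ in each parameter, a consequence of ultra-log-concavity) that any minimizer of $\max_i f_{k,i}(\boldsymbol{p})$ must lie on a tie-set $\mathcal{P}_{k,i}=\{f_{k,i}=f_{k,i-1}\}$; it then runs Lagrange multipliers on that constraint manifold and uses the \emph{strict} Newton inequalities for the difference sequence $D_{k,j}=f_{k,j}-f_{k,j-1}$ (whose generating polynomial is $(1-z)P(z)$, hence real-rooted) to show the multiplier equations force $p_1=\cdots=p_k$, via injectivity of a M\"obius transformation. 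If you want to salvage your route, you would need to either prove the Schur-convexity claim rigorously (handling the tied-maximum and mode-offset cases) or replace it with a tie-point-plus-critical-point argument of the paper's type; as written, the proposal does not constitute a proof.
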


{\it{Remark: }}
 Let $k\in\N, k\geq 2$. Similarly to \cite[Note 4.1]{MV}. If we denote by $C_k$ the best constant such that
\begin{equation*}
\|\underbrace{f*f*\dots*f}_{k}\|_{\infty}\geq C_{k}\|f\|^k_1
\end{equation*}
holds for all nonnegative functions $f:\R\to\R$ supported on $(-\frac{1}{2k},\frac{1}{2k})$, then
$$
C_k=\lim_{m\to\infty} k(m+1)\bar C_{k,m}, \ \text{and}\ \ C_k\leq k(m+1)\bar C_{2,m} \ \text{for all}\ m\geq 1.
$$
This follows immediately by considering simple functions\\ $f(x):=\sum_{j=1}^{k}a_j\chi_{[-\frac{1}{2k}+\frac{j-1}{k^2},-\frac{1}{2k}+\frac{j}{k^2}]}(x)$ for $a_1,\dots,a_k\in\R_{\geq0}$.
\begin{corollary}
Let $k\in\N, k\geq 2$. The following inequality holds
\begin{equation*}
 C_{k}\leq 2kC_{k,1}.
\end{equation*} 
\end{corollary}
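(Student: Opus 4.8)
The plan is to read off the corollary as the $m=1$ case of the inequality already recorded in the Remark preceding it. That Remark states that for every integer $m\geq 1$ one has
\[
C_k \leq k(m+1)\,\bar C_{k,m}
\]
(reading the second subscript there as $k$, consistently with the accompanying limit formula). Setting $m=1$, so that the grid $\{0,1,\dots,m\}$ is exactly the discrete cube $\{0,1\}$, this becomes $C_k\leq 2k\,\bar C_{k,1}$. Theorem~\ref{thm: main theorem} identifies $\bar C_{k,1}=C_{k,1}$, and substituting gives the desired $C_k\leq 2k\,C_{k,1}$. In this sense the corollary is immediate from the Remark once the Theorem is known.

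To keep the argument self-contained at $m=1$ I would redo the Remark's discretization in this one case. Given a nonnegative $g:\{0,1\}\to\R$, put $h:=1/(2k)$ and let $f:=g(0)\,\chi_{I_0}+g(1)\,\chi_{I_1}$, where $I_0,I_1$ are the two length-$h$ intervals partitioning $(-1/(2k),1/(2k))$. Then $\|f\|_1=h(g(0)+g(1))=h\,\|g\|_1$. Viewing $f$ as $\bigl(g(0)\delta_0+g(1)\delta_h\bigr)*\chi_{[0,h)}$ up to translation and using $\chi_{[0,h)}^{*k}=h^{k-1}M_k(\cdot/h)$ for the cardinal B-spline $M_k=\chi_{[0,1)}^{*k}$, the $k$-fold convolution factors as
\[
f^{*k}(x)=h^{k-1}\sum_{n}(g^{*k})(n)\,M_k(x/h-n).
\]
Because the translates $M_k(\cdot-n)$ are nonnegative and form a partition of unity, and $g^{*k}\geq 0$, each value of the sum is a convex combination of the numbers $(g^{*k})(n)$; hence $\|f^{*k}\|_\infty\leq h^{k-1}\|g^{*k}\|_\infty$.

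Combining these estimates,
\[
\frac{\|f^{*k}\|_\infty}{\|f\|_1^k}\leq \frac{h^{k-1}\|g^{*k}\|_\infty}{h^{k}\|g\|_1^k}=\frac1h\cdot\frac{\|g^{*k}\|_\infty}{\|g\|_1^k}=2k\,\frac{\|g^{*k}\|_\infty}{\|g\|_1^k},
\]
and since $C_k$ is the infimum of the left-hand ratio over admissible $f$, taking the infimum over nonnegative $g$ on the right gives $C_k\leq 2k\inf_{g\geq 0}\|g^{*k}\|_\infty/\|g\|_1^k$. The one step I would check with care --- and the main (mild) obstacle --- is the last identification: $\bar C_{k,1}$ is by definition an infimum over all real-valued $g$, which a priori could be strictly smaller than the infimum over nonnegative $g$. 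This is resolved by the explicit extremizer furnished by Theorem~\ref{thm: main theorem}, which is nonnegative, so the two infima agree and equal $\bar C_{k,1}=C_{k,1}$. The chain then closes to $C_k\leq 2k\,C_{k,1}$.
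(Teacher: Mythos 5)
Your proposal is correct and follows essentially the route the paper intends: the corollary is the $m=1$ instance of the inequality $C_k\leq k(m+1)\bar C_{k,m}$ from the preceding Remark (whose proof via step functions you spell out explicitly with the B-spline partition-of-unity argument), combined with the identity $\bar C_{k,1}=C_{k,1}$ from Theorem~\ref{thm: main theorem}. Your extra care in checking that the infimum over nonnegative $g$ agrees with $\bar C_{k,1}$, via the nonnegative extremizer of the theorem, fills in a detail the paper leaves implicit.
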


In recent years many convolution inequalities for real valued functions on the hypercube have been studied motivated by applications to additive combinatorics and information theory. For instance, to obtain bouds for additive energies \cite{KT}, \cite{DGIM}, number of disjoint partitions \cite{I}, and sumsets \cite{BIKM}, \cite{MRSZ}, \cite{GMRSZ}. 

For any $k,d \geq 1$, and any $f_{1}, \ldots, f_{k} :\{0,1\}^{d} \to \mathbb{R}$ we have that $f_{1}*f_{2}*\ldots*f_{k}$ is supported on $\{0,1,\dots,k\}^d$, then (since the maximum of a collection of numbers is greater than the average)
\begin{align}\label{trivial inequality}
\|f_{1}*f_{2}*\ldots*f_{k}\|_{\ell^{\infty}(\Z^d)}\geq \frac{1}{(k+1)^d}\prod_{j=1}^{k} \|f_{j}\|_{1}.
\end{align}

Our next result gives an optimal bound improving \eqref{trivial inequality}.

\begin{theorem}\label{thm: main theorem for d>1}
For any $k,d \geq 1$, and any $f_{1}, \ldots, f_{k} :\{0,1\}^{d} \to \mathbb{R}$ we have 
\begin{align}\label{maininequality}
\|f_{1}*f_{2}*\ldots*f_{k}\|_{\ell^{\infty}(\Z^d)}&\geq {{k}\choose{\lfloor k/2 \rfloor}}^d \left(\frac{\lfloor \frac{k+1}{2}\rfloor \lceil \frac{k+1}{2}\rceil}{(\lfloor \frac{k+1}{2}\rfloor +\lceil \frac{k+1}{2}\rceil)^2}\right)^{\frac{kd}{2}}\prod_{j=1}^{k} \|f_{j}\|_{1}.\\
&\hspace{-0.6cm}=\begin{cases}
\dfrac{{{k}\choose{\lfloor k/2 \rfloor}}^d}{2^{kd}}\prod_{j=1}^{k} \|f_{j}\|_{1}, & \text{if}\ k\ \text{odd},\\
\dfrac{{{k}\choose{\lfloor k/2 \rfloor}}^d}{2^{kd}}\Big(1-\frac{1}{(k+1)^2}\Big)^{\frac{kd}{2}}\prod_{j=1}^{k} \|f_{j}\|_{1},  & \text{if}\ k\ \text{even}.
\end{cases}
\end{align}
Moreover,  for each fixed $k$ this inequality is sharp, the equality is attained when $f_i(x_1,\dots,x_d):=\left(k-\lfloor\frac{k}{2}\rfloor\right)^{\sum_{i=1}^{d}x_i}\left(\lfloor\frac{k}{2}\rfloor+1\right)^{d-\sum_{i=1}^{d}x_i}$ for all $x\in\{0,1\}^d$, $1\leq i\leq k$. In particular, if $k$ is odd, the equality is attained when $f_i(x):=\left(\frac{k+1}{2}\right)^{d}$ for all $x\in\{0,1\}^d$, $1\leq i\leq k$.
\end{theorem}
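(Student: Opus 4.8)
The plan is to deduce the $d$-dimensional estimate \eqref{maininequality} from the one-dimensional case already settled in Theorem \ref{thm: main theorem}, by establishing that the optimal constant is multiplicative in $d$. Since the constant in \eqref{maininequality} is exactly the $d$-th power of the one in Theorem \ref{thm: main theorem}, it suffices to prove the lower bound with constant $C_{k,1}^{d}$, the matching reverse (that no larger constant is admissible) being furnished by the extremizer exhibited below. I would argue by induction on $d$, the base case $d=1$ being Theorem \ref{thm: main theorem}. The engine is the generating-function factorization: to $f_j:\{0,1\}^d\to\R$ associate the multi-affine polynomial $P_j(z)=\sum_{x}f_j(x)z^{x}$, so that $\prod_j P_j=P_{f_1*\cdots*f_k}$ and $g(y):=(f_1*\cdots*f_k)(y)=[z^{y}]\prod_j P_j(z)$, with $\|g\|_{\infty}=\max_y|g(y)|$.

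For the inductive step I would single out the last coordinate. Writing $z=(z',z_d)$ and $P_j=P_j^{0}(z')+z_dP_j^{1}(z')$, where $P_j^{b}$ is the transform of the slice $f_j(\cdot,b)$, the product becomes $\prod_jP_j=\sum_{t=0}^{k}z_d^{t}G_t(z')$ with $G_t=\sum_{|S|=t}\prod_{j\in S}P_j^{1}\prod_{j\notin S}P_j^{0}$ the transform of the $t$-slice $g(\cdot,t)$. Two complementary manipulations expose one factor of $C_{k,1}$. First, for each real $\theta$ the $(d-1)$-dimensional functions $h_j^{\theta}:=f_j(\cdot,0)+\theta f_j(\cdot,1)$ satisfy the identity $(h_1^{\theta}*\cdots*h_k^{\theta})(y')=\sum_{t}\theta^{t}g(y',t)$, to which the inductive hypothesis applies and delivers the sharp constant $C_{k,1}^{d-1}$ in the first $d-1$ coordinates. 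Second, freezing $z'$ at a real point $\xi$ turns $t\mapsto G_t(\xi)$ into a genuine one-dimensional convolution of the scalars $\psi_j(b)=P_j^{b}(\xi)$, so Theorem \ref{thm: main theorem} yields $\max_t|G_t(\xi)|\ge C_{k,1}\prod_j\big(|P_j^{0}(\xi)|+|P_j^{1}(\xi)|\big)$, the sharp constant in the last coordinate. Real-valuedness is used critically: each $P_j$ is affine in $z_d$, so every such fiber is a product of real linear factors.

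The main obstacle is to fuse these two sharp gains into the single factor $C_{k,1}^{d}$ without leaking a constant, and this is precisely where sign cancellation must be tamed. Exposing the product-over-$j$ structure in the last coordinate forces one to evaluate $z'$ at a point $\xi$, but then the crude passage $|G_t(\xi)|\le\|g\|_\infty\prod_{\ell}(1+\xi_\ell+\cdots+\xi_\ell^{k})$ discards exactly the sharp constants in the first $d-1$ coordinates; conversely, keeping those coordinates as coefficients (so as to invoke the inductive hypothesis sharply) hides the product structure needed for the last. These two demands are reconciled only by choosing the evaluation point $\xi$ (equivalently the parameter $\theta$) in an $f$-dependent way, tuned as in the extremal analysis underlying Theorem \ref{thm: main theorem}, so that all $d$ coordinates are simultaneously near-saturated. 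I expect this simultaneous, cancellation-robust selection to be the technical heart of the argument, since no fixed choice of evaluation point can work against an adversary placing roots in the decisive region.

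Finally, sharpness is a direct computation. For $f_i(x)=q^{\sum_\ell x_\ell}\,p^{\,d-\sum_\ell x_\ell}$ with $p=\lfloor k/2\rfloor+1$ and $q=\lceil k/2\rceil$, the transform factorizes as $P_i(z)=\prod_{\ell=1}^{d}(p+qz_\ell)$, so $g$ is the $d$-fold tensor product of the one-dimensional extremal convolution $g_1(t)=\binom{k}{t}p^{\,k-t}q^{t}$. Its maximum is attained at $y=(\lfloor k/2\rfloor,\dots,\lfloor k/2\rfloor)$ and equals $\binom{k}{\lfloor k/2\rfloor}^{d}\big(pq/(p+q)^{2}\big)^{kd/2}\prod_j\|f_j\|_1$, matching the right-hand side of \eqref{maininequality} and reducing coordinatewise to the equality case of Theorem \ref{thm: main theorem}; the odd-$k$ specialization $p=q=\tfrac{k+1}{2}$ recovers the constant function $f_i\equiv(\tfrac{k+1}{2})^{d}$.
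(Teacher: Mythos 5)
Your sharpness computation at the end is correct and agrees with the paper's extremizer, but the lower bound is not proved. After setting up the two partial estimates (the $(d-1)$-dimensional bound applied to $h_j^{\theta}$, and the one-dimensional bound applied to the fiber $t\mapsto G_t(\xi)$), you state that fusing them into the single constant $C_{k,1}^{d}$ requires an $f$-dependent choice of the evaluation point and that you ``expect this \dots to be the technical heart of the argument'' --- in other words, the decisive step is announced but never carried out. This is a genuine gap rather than a routine verification: as your own discussion makes clear, each of the two manipulations by itself degrades to the trivial constant in the complementary coordinates (for instance, bounding $\sum_t\theta^t g(y',t)$ by $\|g\|_\infty\sum_t\theta^t$ only recovers a factor $\tfrac{1}{k+1}$ in the last coordinate), so the entire content of the inductive step sits in the part you leave open. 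There is also a secondary defect in your step (b): Theorem \ref{thm: main theorem} cannot be invoked for the scalars $\psi_j(b)=P_j^{b}(\xi)$ unless they are nonnegative --- its proof normalizes $x_i=f_i(1)/(f_i(0)+f_i(1))\in[0,1]$, and the inequality genuinely fails for sign-changing data (for $k=2$, $\psi_1=(1,-1)$ and $\psi_2=(1,1)$ give $\psi_1*\psi_2=(1,0,-1)$, so $\max_t|(\psi_1*\psi_2)(t)|=1<\tfrac{4}{9}\cdot2\cdot2$). Hence the admissible evaluation points $\xi$ are further constrained, which compounds the difficulty of the selection you defer.

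The paper's own argument is entirely different and much more elementary: it performs a dimension-compression (tensorization) in the spirit of \cite[Proposition 2.1]{BIKM}, fixing the first $d-1$ coordinates, applying the $d=1$ case directly to the slice functions $y\mapsto f_i(\bar x_i,y)$ for each decomposition $\sum_i\bar x_i=\bar x$, and then iterating with $\bar f_i(\bar z)=\sum_{y}f_i(\bar z,y)$; no generating polynomials or evaluation points appear. The obstruction you isolate --- that a maximum of a sum of convolutions need not dominate the sum of the individual maxima, since those maxima may occur at different points --- is precisely the interchange that any slicing argument here must justify, so you have correctly located where the work lies; but locating the difficulty is not the same as resolving it, and as written your proposal does not establish inequality \eqref{maininequality} for any $d\geq 2$.
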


\begin{corollary}\label{corollary for sumsets}
    For all $A\subset\{0,1\}^d$ we have that
    \begin{align}\label{ineq for sumsets}
        &\max_{x\in kA}|\{(a_1,\dots,a_k)\in A^k; a_1+\dots+a_k=x\}|\nonumber\\
        &\geq {{k}\choose{\lfloor k/2 \rfloor}}^d \left(\frac{\lfloor \frac{k+1}{2}\rfloor \lceil \frac{k+1}{2}\rceil}{(\lfloor \frac{k+1}{2}\rfloor +\lceil \frac{k+1}{2}\rceil)^2}\right)^{\frac{kd}{2}}|A|^k.
    \end{align}
    In particular, if $k$ is odd and $A$ is a $g-$Sidon set of order $k$ then
    $|A|^k\leq g\frac{2^{kd}}{{{k}\choose{\lfloor k/2 \rfloor}}^d}$.
\end{corollary}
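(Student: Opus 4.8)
The plan is to apply Theorem~\ref{thm: main theorem for d>1} directly to the characteristic function of $A$. First I would set $f_1=f_2=\dots=f_k=\chi_A$. Since $\chi_A$ takes only the values $0$ and $1$, we immediately get $\|f_j\|_1=\sum_{x\in\{0,1\}^d}\chi_A(x)=|A|$ for every $j$, so the right-hand side of \eqref{maininequality} becomes exactly the right-hand side of \eqref{ineq for sumsets}.

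Next I would identify the left-hand side with the quantity being bounded. By the definition of convolution,
\[
(\chi_A*\dots*\chi_A)(x)=\sum_{a_1+\dots+a_k=x}\chi_A(a_1)\cdots\chi_A(a_k)=|\{(a_1,\dots,a_k)\in A^k:\ a_1+\dots+a_k=x\}|,
\]
so the value of the iterated convolution at $x$ is precisely the number of ordered representations of $x$ as a sum of $k$ elements of $A$. Because this function is nonnegative and supported on $kA$, its $\ell^\infty(\Z^d)$ norm equals the maximum of the counting function over $x\in kA$. Combining these two observations with Theorem~\ref{thm: main theorem for d>1} yields \eqref{ineq for sumsets}.

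For the final assertion I would specialize to $k$ odd. In that case the constant in Theorem~\ref{thm: main theorem for d>1} simplifies to $\binom{k}{\lfloor k/2\rfloor}^d/2^{kd}$, so \eqref{ineq for sumsets} reads $\max_{x\in kA}|\{\dots\}|\geq \binom{k}{\lfloor k/2\rfloor}^d|A|^k/2^{kd}$. If $A$ is a $g$-Sidon set of order $k$, then by definition every fibre has size at most $g$, hence the left-hand side is at most $g$; rearranging gives $|A|^k\leq g\,2^{kd}/\binom{k}{\lfloor k/2\rfloor}^d$.

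Since every step is a direct substitution into the already-established Theorem~\ref{thm: main theorem for d>1}, there is no genuine obstacle here; the only point requiring a moment's care is the identification of $\|\chi_A*\dots*\chi_A\|_{\ell^\infty(\Z^d)}$ with the maximal fibre count, which relies on the convolution being nonnegative and supported on $kA$, so that the supremum over $\Z^d$ is attained within $kA$ and matches the maximum appearing in \eqref{ineq for sumsets}.
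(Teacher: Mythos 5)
Your proposal is correct and is exactly the paper's argument: the paper's proof is the one-line observation that the corollary follows from Theorem~\ref{thm: main theorem for d>1} by choosing $f_i=\chi_A$ for all $1\leq i\leq k$, and you have simply spelled out the routine identifications ($\|\chi_A\|_1=|A|$, the convolution value at $x$ counting ordered representations, and the $g$-Sidon bound on the fibres). No differences in approach.
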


\noindent
{\it{Remark:}} If $k$ is odd and $A=\{0,1\}^d$ then both sides of \eqref{ineq for sumsets} are equal to ${{k}\choose{\lfloor k/2 \rfloor}}^d$.

\begin{proof}[Proof of Corollary \ref{corollary for sumsets}]
Follows from Theorem \ref{thm: main theorem for d>1} choosing $f_i=\chi_{A}$ for all $1\leq i\leq k$.
\end{proof}

\section{Proof of main results}

\begin{proof}[Proof of Theorem \ref{thm: main theorem for d>1}]
This follows from Theorem \ref{thm: main theorem} after a standard compressing dimension (tensorization) argument \cite[Proposition 2.1]{BIKM}, we include the details for completeness.

Assume that \eqref{maininequality} holds for $d=1$. For any $(\bar x,x')\in \Z^{d-1}\times\Z$, defining
$\bar f_i:\Z^{d-1}\to\R$ by $\bar f_i(z):=\sum_{y=0}^{1}f_i(\bar z,y)$, we have
\begin{align*}
&\max_{x\in\{0,1,\dots,k\}^d} f_1*f_2*\dots*f_k(\bar x,x')\\
&=
    \max_{\bar x\in\{0,1,\dots,k\}^{d-1}}\sum_{\sum \bar x_i=\bar x}\max_{x'\in\{0,1,\dots,k\}}\sum_{\sum x'_i=x'}\prod_{i=1}^{k}f_i(\bar x_i,x'_i)\\
    &\stackrel{\text{case}\, d=1}{\geq}  \max_{\bar x\in\{0,1,\dots,k\}^{d-1}}\sum_{\sum \bar x_i=\bar x}C\prod_{i=1}^{k}\sum_{y=0}^{1}f_i(\bar x_i,y)\\
    &=C\max_{\bar x\in\{0,1,\dots,k\}^{d-1}}\sum_{\sum \bar x_i=\bar x}\prod_{i=1}^{k}\bar f_i(\bar x_i)\\
    &=C\max_{x\in\{0,1,\dots,k\}^{d-1}} \bar f_1*\bar f_2*\dots*\bar f_k(\bar x)\\
    &\stackrel{\text{iterate}}{\geq}\ldots C^{d}\prod_{i=1}^{k}\|f_i\|_1.
\end{align*}
\end{proof}

\begin{proof}[Proof of Theorem \ref{thm: main theorem}]
Given functions $f_{i}:\{0,1\}\to\R$ for $1\leq i\leq k$. Without loss of generality, we assume that $f_i\neq0$, otherwise \eqref{maininequality} holds trivially. Defining $x_{i}:=\frac{f_{i}(1)}{f_{i}(0)+f_{i}(1)}$ for each $1\leq i\leq k$, \eqref{maininequality} is equivalent to prove 
\begin{align}\label{eq: x1 x2 x3 ...}
&\inf_{0\leq x_i\leq 1}\max_{0\leq m\leq k}\sum_{i_1,i_2,\dots,i_m}x_{i_1}\dots x_{i_m}\prod_{j\notin\{i_1,\dots,i_m\}} (1-x_j)\\
&= {{k}\choose{\lfloor k/2 \rfloor}}^{} \left(\frac{\lfloor \frac{k+1}{2}\rfloor \lceil \frac{k+1}{2}\rceil}{(\lfloor \frac{k+1}{2}\rfloor +\lceil \frac{k+1}{2}\rceil)^2}\right)^{\frac{k}{2}}\nonumber.
\end{align}

\subsection{The diagonal case.}
The following Lemma correspond to the case $x_1=x_2=\dots=x_k$ in \eqref{eq: x1 x2 x3 ...}.
\begin{lemma}\label{Key lemma todas iguales} Let $k\geq 1$, the following identity holds
\begin{equation}
\bar C_{k,1}=\inf_{x\in[0,1]}\max_{0\leq m\leq k} {{k}\choose{m}}x^{k-m}(1-x)^m={{k}\choose{\lfloor k/2 \rfloor}}^{} \left(\frac{\lfloor \frac{k+1}{2}\rfloor \lceil \frac{k+1}{2}\rceil}{(\lfloor \frac{k+1}{2}\rfloor +\lceil \frac{k+1}{2}\rceil)^2}\right)^{\frac{k}{2}}.
\end{equation}
\end{lemma}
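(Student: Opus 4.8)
The plan is to convert the lemma into a one–variable min–max problem, show that its minimizer is one of finitely many ``breakpoints'', and then minimize over those breakpoints.

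First I would unwind the definition of $\bar C_{k,1}$. For $f:\{0,1\}\to\R$ the convolution $f^{*k}$ is supported on $\{0,\dots,k\}$ and every coefficient is a single monomial,
\[
f^{*k}(m)=\binom{k}{m}f(0)^{k-m}f(1)^m ,
\]
so $\|f^{*k}\|_\infty=\max_{0\le m\le k}\binom{k}{m}|f(0)|^{k-m}|f(1)|^m$ depends only on $|f(0)|$ and $|f(1)|$. Hence in $\bar C_{k,1}=\inf_{f\ne 0}\|f^{*k}\|_\infty/\|f\|_1^k$ we may take $f\ge 0$ and, by homogeneity, $f(0)+f(1)=1$; writing $x=f(0)$ yields the first equality, with infimand $G(x):=\max_{0\le m\le k}g_m(x)$ where $g_m(x):=\binom{k}{m}x^{k-m}(1-x)^m$.

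Next I would pin down the shape of $G$, which is the crux of the argument. From $g_{m+1}(x)/g_m(x)=\frac{k-m}{m+1}\cdot\frac{1-x}{x}$ one checks that $g_m$ is the pointwise maximum precisely on $I_m=[\frac{k-m}{k+1},\frac{k-m+1}{k+1}]$, while $g_m$ (as a function of $x$) is unimodal with unique peak at $x=\frac{k-m}{k}$. The key geometric fact is that this peak always lies inside $I_m$, so on each $I_m$ the graph of $G=g_m$ rises to an interior maximum and decreases toward both endpoints; consequently $\inf_{[0,1]}G$ is attained at a breakpoint $\frac{j}{k+1}$. Since $G(0)=G(1)=1$, the problem reduces to $\min_{0\le m\le k-1}h(m)$, where
\[
h(m):=g_m\!\left(\tfrac{k-m}{k+1}\right)=\binom{k}{m}\frac{(k-m)^{k-m}(m+1)^m}{(k+1)^k}.
\]

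Finally I would minimize $h$. A short computation gives the clean ratio
\[
\frac{h(m+1)}{h(m)}=\Bigl(1-\tfrac{1}{k-m}\Bigr)^{k-m-1}\Bigl(1+\tfrac{1}{m+1}\Bigr)^{m+1}.
\]
Because $n\mapsto(1+1/n)^n$ is increasing and $n\mapsto(1-1/n)^{n-1}$ is decreasing, both factors increase with $m$, so the ratio is increasing and meets $1$ exactly once; combined with the reflection symmetry $h(m)=h(k-1-m)$ (coming from $x\mapsto 1-x$, $g_m\leftrightarrow g_{k-m}$) this forces $h$ to be minimized at the central index $m=\lfloor k/2\rfloor$ (with a second equal minimum at $k/2-1$ when $k$ is even). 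Evaluating there, where $\{k-m,\,m+1\}=\{\lfloor\frac{k+1}{2}\rfloor,\lceil\frac{k+1}{2}\rceil\}$ so that $(k-m)^{k-m}(m+1)^m=(\lfloor\frac{k+1}{2}\rfloor\lceil\frac{k+1}{2}\rceil)^{k/2}$ and $(k+1)^k=(\lfloor\frac{k+1}{2}\rfloor+\lceil\frac{k+1}{2}\rceil)^k$, produces exactly the stated closed form. The remaining ratio algebra and the final substitution are routine; the only delicate point is the geometric observation above that places the infimum at a breakpoint rather than in the interior of some $I_m$.
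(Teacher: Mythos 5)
Your proposal is correct and follows essentially the same route as the paper: reduce to the one-variable min--max, identify the intervals $\bigl[\tfrac{k-m}{k+1},\tfrac{k-m+1}{k+1}\bigr]$ on which $\binom{k}{m}x^{k-m}(1-x)^m$ is the active term, use unimodality of each term on its interval to place the infimum at a breakpoint, and then show the breakpoint values decrease toward the center via the monotonicity of $n\mapsto(1+1/n)^n$ (the paper phrases this as the decrease of $x\mapsto(x/(x+1))^x$ together with the reflection symmetry, rather than through your ratio $h(m+1)/h(m)$, but the two computations are equivalent). No gaps.
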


\begin{proof}[Proof of Lemma \ref{Key lemma todas iguales}]
We observe that
\begin{align}\label{comparion de terminos}
{{k}\choose{i}}x^{k-i}(1-x)^i&={{k}\choose{i+1}}x^{k-i-1}(1-x)^{i+1}\frac{i+1}{k-i}\frac{x}{1-x}\nonumber\\
&\geq {{k}\choose{i+1}}x^{k-i-1}(1-x)^{i+1},
\end{align}
for all $x\geq \frac{k-i}{k+1}$ (with equality for $x=\frac{k-i}{k+1}$). Then, for any $0\leq i\leq k$
\begin{equation*}
    \max_{0\leq m\leq k} {{k}\choose{m}}x^{k-m}(1-x)^m={{k}\choose{i}}x^{k-i}(1-x)^i,
\end{equation*}
for all $x\in \left[\frac{k-i}{k+1},\frac{k+1-i}{k+1}\right]$.

For each $k\geq 1$ and $0\leq i\leq k$, we define the functions $g_{k,i}:\left[\frac{k-i}{k+1},\frac{k+1-i}{k+1}\right]\to\R$ by $g_{k,i}(x):=x^{k-i}(1-x)^i$. We observe that $g_{k,i}$ is increasing in $[\frac{k-i}{k+1},\frac{k-i}{k}]$ and decreasing in $[\frac{k-i}{k},\frac{k+1-i}{k+1}]$, since $g'_{k,i}(x)=x^{k-i-1}(1-x)^{i-1}[(k-i)-kx]$.
Therefore
$$
g_{k,i}(x)\geq \min\left\{g_{k,i}\left(\frac{k-i}{k+1}\right),g_{k,i}\left(\frac{k+1-i}{k+1}\right)\right\}
$$
for all $x\in \left[\frac{k-i}{k+1},\frac{k+1-i}{k+1}\right]$. Moreover, observe that by \eqref{comparion de terminos} we have that
${{k}\choose{i}}g_{k,i}(\frac{k-i}{k+1})={{k}\choose{i+1}}g_{k,i+1}(\frac{k+1-(i+1)}{k+1})$. Then, since the function $h:[0,+\infty)\to\R$ defined by
$h(x):=\left(\frac{x}{x+1}\right)^x$, is a decreasing function (decrease to $\frac{1}{e}$ as $x\to\infty$), we have that
$$
\left(\frac{k-i}{k-i+1}\right)^{k-i}\leq \left(\frac{i}{i+1}\right)^{i}
$$
for all $i\leq \frac{k}{2}$. Equivalently, we have
$$
g_{k,i}\left(\frac{k-i}{k+1}\right)\leq g_{k,i}\left(\frac{k+1-i}{k+1}\right).
$$
From this, by symmetry, we conclude that
\begin{align*}
    \inf_{x\in[0,1]}\max_{0\leq m\leq k} {{k}\choose{m}}x^{k-m}(1-x)^m&=\min_{0\leq i\leq k/2} {{k}\choose{i}}g_{k,i}\left(\frac{k-i}{k+1}\right)\\
    &={{k}\choose{\lfloor \frac{k}{2}\rfloor}}g_{k,\lfloor \frac{k}{2}\rfloor}\left(\frac{k-\lfloor \frac{k}{2}\rfloor}{k+1}\right)\\
    &={{k}\choose{\lfloor k/2 \rfloor}}^d \left(\frac{\lfloor \frac{k+1}{2}\rfloor \lceil \frac{k+1}{2}\rceil}{(\lfloor \frac{k+1}{2}\rfloor +\lceil \frac{k+1}{2}\rceil)^2}\right)^{\frac{k}{2}}.
\end{align*}
\end{proof}

\subsection{From the general case to the diagonal case.}
By Lemma \ref{Key lemma todas iguales} it is enough to prove that $C_{k,1}=\bar C_{k,1}$, for this, a probabilistic interpretation will be convenient. Let $\boldsymbol{p}=(p_1,\dots,p_{k})\in(0,1)^{k}$ to be a $k$-tuple of parameters for $k\geq 2$.  For all $1\leq i\leq k$, observe that each function $f_i:\{0,1\}\to[0,1]$ defined by $f_i(1)=p_i$ and $f_i(0)=1-p_i$ is a Bernoulli random variable with success probability $p_i\in[0,1]$. The convolution $f_{1}*f_{2}*\ldots*f_{k}:\{0,1,\dots,k\}\to\R$  can be represented by $$f_{k,i}(\boldsymbol{p}):=(f_{1}*f_{2}*\ldots*f_{k})(i).$$
 The value of $f_{k,i}(\boldsymbol{p})$ equals the probability of having $i$ successful trials on a collection of $k$ independent Bernoulli random variables, this is the definition of the probability mass function of the \textit{Poisson Binomial distribution} (PB pmf) with success probabilities $[p_1,\dots , p_k]$ (also often called \textit{Bernoulli sum}).

 \subsubsection{Proof strategy, notation and basic properties.}
 We will interpret the convolutions $f_{k,i}(\boldsymbol{p})$ using this Probability theory viewpoint, and we will make use of the Poisson Binomial distribution properties. In \ref{intersection points} we reduce the analysis to intersection points $\bigcup_{i=1}^k \mathcal{P}_{k,i},$ where $\mathcal{P}_{k,i}:=\{\boldsymbol{p}\in(0,1)^k: f_{k,i}(\boldsymbol{p})=f_{k,i-1}(\boldsymbol{p})\}$.
In \ref{Lagrange multipliers} we prove that the infimum value of
 \begin{align}\label{problem linf}
     \inf_{\boldsymbol{p}\in[0,1]^{k}} \|f_{1}*f_{2}*\ldots*f_{k}\|_\infty=\inf_{\boldsymbol{p}\in[0,1]^{k}} \max_{0\leq i\leq k}\{f_{k,i}(\boldsymbol{p}) \}
\end{align}
must be attained when $f_1=f_2=\ldots=f_k$, which reduces the problem to the Lemma \ref{Key lemma todas iguales}.

We denote by  $\boldsymbol{p}_j':=(p_1,p_2,\dots,p_{j-1},p_{j+1},\dots,p_k)\in(0,1)^{k-1}$, the parameter vector $\boldsymbol{p}$ with the $j$-th entry removed. Then $$f_{k-1,i}(\boldsymbol{p}_j'):=(f_{1}*f_{2}*\ldots*f_{j-1}*f_{j+1}*\dots*f_{k})(i),$$
is the probability of $i$ successful trials with the $j-$th Bernoulli random variable ignored. We have the recursive relation 
 \begin{equation}\label{recursion}
f_{k,i}(\boldsymbol{p})= (1-p_j)f_{k-1,i}(\boldsymbol{p}_j')+(p_j)f_{k-1,i-1}(\boldsymbol{p}_j'),
 \end{equation}
which is valid for all $0\leq i,j\leq k$, with the convention that $f_{k-1,k}(\boldsymbol{p}_j')=f_{1,-1}(\boldsymbol{p}_j')=0$.\\

The first property concerning Poisson Binomial distributions is that they are \textit{unimodal}. This is, ``first increasing, then decreasing, and the mode is either unique or shared by two adjacent integers $i$ and $i-1$'' (As described by \cite{WANG}). More precisely, for each choice of parameters $\boldsymbol{p}\in[0,1]^k$ there is a unique index $0\leq i\leq k$ such that \begin{equation}\label{unimodality}
f_{k,k}(\boldsymbol{p})<\dots <f_{k,i+1}(\boldsymbol{p})< f_{k,i}(\boldsymbol{p})\geq f_{k,i-1}(\boldsymbol{p})>\dots>f_{k,1}(\boldsymbol{p})>f_{k,0}(\boldsymbol{p}).  
\end{equation}
 where this index value, $i$, is called \textit{the mode}. For each $\boldsymbol{p}\in[0,1]^k$, the maximum of the sequence $\{f_{k,j}(\boldsymbol{p})\}$ is achieved uniquely at $f_{k,i}(\boldsymbol{p})$ or it can be shared by $f_{k,i}(\boldsymbol{p})=f_{k,i-1}(\boldsymbol{p})$, this is called \textit{unimodality}.. 
 
It is also well known that the Poisson Binomial is a \textit{log-concave} distribution, this is 
\begin{equation}\label{log-concave}
    f_{k,i}(\boldsymbol{p})^2\geq f_{k,i-1}(\boldsymbol{p})f_{k,i+1}(\boldsymbol{p})
\end{equation}
for $1\leq i\leq k$, and for all $\boldsymbol{p}\in[0,1]^k$. But it indeed belongs to a class of distributions satisfying an even stronger notion, the \textit{ultra log-concavity}, often defined as 
\begin{equation}\label{ultra log-concave}
    f_{k,i}(\boldsymbol{p})^2\geq \left(\dfrac{i+1}{i}\right)\left(\dfrac{k-i+1}{k-i}\right) f_{k,i-1}(\boldsymbol{p})f_{k,i+1}(\boldsymbol{p}).
\end{equation}
for $1\leq i\leq k-1$, see \cite[Theorem 2]{WANG} or \cite[Corollary 4.2]{TANG}.

\subsubsection{Reduction to intersection points}\label{intersection points}

It will be useful to think about $f_{k,i}(\boldsymbol{p})=f_{k,i}(\boldsymbol{p}_j',p_j)$ as a linear function $f_{k,i}(\boldsymbol{p}_j',\ \cdot):[0,1] \rightarrow [0,1]$, defined by (\ref{recursion}). One more property of the probability distributions is the following monotone likelihood ratio.

\begin{proposition}\label{ratios are increasing}
Let $1\leq i\leq k$ and $\boldsymbol{p}'_j\in (0,1)^{k-1}$. Then
\begin{enumerate} 
    \item \begin{equation}\label{ratios f}
        r_{k,i}(\boldsymbol{p}'_j,p_j):=\dfrac{f_{k,i}(\boldsymbol{p}'_j,p_j)}{f_{k,i-1}(\boldsymbol{p}'_j,p_j)}
    \end{equation}
    is an increasing function on $p_j\in[0,1]$.
\item  \begin{equation}\label{ratios f monotonicity on index}
        r_{k,i+1}(\boldsymbol{p})<  r_{k,i}(\boldsymbol{p})
    \end{equation}
   decreases on the index $i$ for all $1\leq i\leq k$, and any $\boldsymbol{p}\in[0,1]^k$.
   \end{enumerate}
\end{proposition}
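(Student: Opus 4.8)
The plan is to derive both monotonicity statements directly from the log-concavity and ultra log-concavity of the Poisson Binomial distribution recorded in \eqref{log-concave} and \eqref{ultra log-concave}, so that the proposition becomes essentially a restatement of these structural properties in terms of the likelihood ratio.

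For part (1), I would first use the recursion \eqref{recursion} in the single variable $p_j$ to write both the numerator and denominator of $r_{k,i}(\boldsymbol{p}'_j,p_j)$ as affine functions of $p_j$. Writing $A_m:=f_{k-1,m}(\boldsymbol{p}'_j)$ for the masses of the reduced $(k-1)$-parameter distribution, we get
$$r_{k,i}(\boldsymbol{p}'_j,p_j)=\frac{(1-p_j)A_i+p_j A_{i-1}}{(1-p_j)A_{i-1}+p_j A_{i-2}},$$
so that $r_{k,i}$ is a Möbius (fractional linear) function of $p_j$. A short computation shows that, after the cross terms cancel, the numerator of $\frac{d}{dp_j}r_{k,i}$ equals $A_{i-1}^2-A_i A_{i-2}$, independent of $p_j$; this is precisely the determinant of the associated Möbius transformation. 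By the log-concavity \eqref{log-concave} applied to $f_{k-1,\cdot}(\boldsymbol{p}'_j)$, this defect is nonnegative, which forces $r_{k,i}$ to be increasing on the interval where its denominator is positive. I would treat the boundary indices $i=1$ and $i=k$ separately, where $A_{i-2}$ or $A_i$ vanishes and the claim is immediate.

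For part (2), I would observe that the assertion $r_{k,i+1}(\boldsymbol{p})<r_{k,i}(\boldsymbol{p})$ is, after clearing denominators, exactly the strict log-concavity inequality
$$f_{k,i}(\boldsymbol{p})^2>f_{k,i-1}(\boldsymbol{p})\,f_{k,i+1}(\boldsymbol{p}).$$
This strict form is not delivered by \eqref{log-concave}, which only gives $\geq$; instead I would invoke the ultra log-concavity \eqref{ultra log-concave}, whose prefactor $\left(\frac{i+1}{i}\right)\left(\frac{k-i+1}{k-i}\right)$ is strictly larger than $1$. For $\boldsymbol{p}\in(0,1)^k$ all masses $f_{k,j}(\boldsymbol{p})$ are strictly positive, so every ratio is well defined and the prefactor upgrades the inequality to the desired strict one.

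The computations are routine; the one point that must be gotten right is the clean cancellation in part (1) identifying the sign of the derivative with the log-concavity defect $A_{i-1}^2-A_i A_{i-2}$, together with careful bookkeeping of the boundary indices and of the vanishing of masses when coordinates of $\boldsymbol{p}$ reach $0$ or $1$. Beyond that, the entire conceptual content is carried by \eqref{log-concave} and \eqref{ultra log-concave}.
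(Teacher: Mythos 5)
Your proof is correct and follows essentially the same route as the paper: both use the recursion \eqref{recursion} to write $r_{k,i}(\boldsymbol{p}'_j,\cdot)$ as a M\"obius function of $p_j$ whose monotonicity is controlled by the determinant $f_{k-1,i-1}(\boldsymbol{p}'_j)^2-f_{k-1,i}(\boldsymbol{p}'_j)f_{k-1,i-2}(\boldsymbol{p}'_j)$, and both obtain part (2) by rearranging the ultra log-concavity inequality \eqref{ultra log-concave}. The only difference worth flagging is that in part (1) you invoke the plain log-concavity \eqref{log-concave}, which gives only a nonnegative determinant and hence non-strict monotonicity, whereas the paper applies \eqref{ultra log-concave} there as well to get a strictly positive determinant; the strict increase is what is used afterwards to conclude that $r_{k,i}(\boldsymbol{p}'_j,p_j)=1$ for at most one $p_j^*$, so you should upgrade part (1) to the ultra log-concave inequality exactly as you already do in part (2).
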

{\it{Remark:}} Indeed, the ratios $r_{k,i}(\boldsymbol{p}'_j,p_j)$ are also concave on $p_j\in[0,1]$, but we only need monotonicity for our purposes.
\begin{proof}
By the recursive relation (\ref{recursion}), both the numerator and denominator of the expression of $r_{k,i}$ are linear functions on $p_j$, more precisely
\begin{equation}\label{ratios expanded}
    r_{k,i}(\boldsymbol{p}'_j,p_j) =\dfrac{p_j\Big({f_{k-1,i-1}(\boldsymbol{p}_{j}')-f_{k-1,i}(\boldsymbol{p}_{j}')}\Big)+f_{k-1,i}(\boldsymbol{p}_{j}')}{p_j\Big(f_{k-1,i-2}(\boldsymbol{p}_{j}')-f_{k-1,i-1}(\boldsymbol{p}_{j}')\Big)+f_{k-1,i-1}(\boldsymbol{p}_{j}')}.
\end{equation}
Then, the sign of the derivative $\dfrac{dr_{k,i}}{dp_j}$ is given by 
\begin{align*}
    &\big({f_{k-1,i-1}(\boldsymbol{p}_{j}')-f_{k-1,i}(\boldsymbol{p}_{j}')}\big)f_{k-1,i-1}(\boldsymbol{p}_{j}')\\
    &-\big(f_{k-1,i-2}(\boldsymbol{p}_{j}')-f_{k-1,i-1}(\boldsymbol{p}_{j}')\big)f_{k-1,i}(\boldsymbol{p}_{j}')\\
    =&f_{k-1,i-1}(\boldsymbol{p}_{j}')^2-f_{k-1,i}(\boldsymbol{p}_{j}')f_{k-1,i-2}(\boldsymbol{p}_{j}')>0
\end{align*}
where the last inequality follows from the ultra log-concavity property (\ref{ultra log-concave}), then $\dfrac{dr_{k,i}}{dp_j}>0$ for all $p_j\in(0,1)$.

The second part follows by (\ref{ultra log-concave}) as well, since $\left(\frac{i+1}{i}\right)\left(\frac{k-i+1}{k-i}\right)>1$, then, rearranging the terms we get $\frac{f_{k,i}(\boldsymbol{p})}{f_{k,i-1}(\boldsymbol{p})}>\frac{f_{k,i+1}(\boldsymbol{p})}{f_{k,i}(\boldsymbol{p})}$, or equivalently $r_{k,i}(\boldsymbol{p})> r_{k,i+1}(\boldsymbol{p})$.
\end{proof}
By the previous proposition, for a fixed $\boldsymbol{p}_{j}'\in(0,1)^{d-1}$, the ratio $r_{k,i}(\boldsymbol{p}'_j,p_j)$ equals $1$ in at most one value of $p_j\in[0,1]$. Indeed, by solving for $p_j$ in (\ref{ratios expanded}), we obtain that $r_{k,i}(\boldsymbol{p}'_j,p_j^*)=1$ if and only if $$p_j^*:=\dfrac{f_{k-1,i-1}(\boldsymbol{p}_{j}')-f_{k-1,i}(\boldsymbol{p}_{j}')}{2f_{k-1,i-1}(\boldsymbol{p}_{j}')-f_{k-1,i}(\boldsymbol{p}_{j}')-f_{k-1,i-2}(\boldsymbol{p}_{j}')}\in[0,1].$$

By monotonicity of $r_{k,i}(\boldsymbol{p}'_j,p_j)$ we have the following result about the preservation of the leading mode for values of $p_j$ around $p_j^*$.
\begin{proposition} Let a choice of parameters $[\boldsymbol{p}'_j,p_j^*]\in[0,1]^k$, where the index $i$, is the leading mode, then
\begin{equation*}\label{maximum function around p_j^*}
\max_{0\leq i\leq k}\{f_{k,i}(\boldsymbol{p}'_j,p_j) \}=\begin{cases} 
      f_{k,i-1}(\boldsymbol{p}'_j,p_j), & \text{if } \ \  p_j\in(0,p_j^*)\\
     f_{k,i-1}(\boldsymbol{p}'_j,p_j)=f_{k,i}(\boldsymbol{p}'_j,p_j), & \text{if } \ \  p_j=p_j^* \\
     f_{k,i}(\boldsymbol{p}'_j,p_j), & \text{if } \ \  p_j\in(p_j^*,1) \\
   \end{cases}
\end{equation*}
Moreover, the following Figure \ref{fig:min of max} is accurate for all $p_j\in(0,1).$
\begin{figure}[h]
\centering
\begin{tikzpicture}[scale=4]
    \draw[->] (-0.15,0) -- (1.2,0) node[right] {\small $p_j$}; 
    \draw[->] (-0,-.1) -- (-0,1.3) ;
    \draw[-] (0,0.6)  -- (1,0.95) node[right,black] {$\textcolor{black}{f_{k,i}}$};
    \draw[red] (0,0.15)  -- (1,0.6) node[right,black] {$\textcolor{red}{f_{k,i+1}}$};
    \draw[-] (0,1)  -- (1,0.4) node at (-0.15,1.05) {$\textcolor{black}{f_{k,i-1}}$};
    \draw[blue] (0,0.4)  -- (1,0.07) node at (-0.15,0.45) {$\textcolor{blue}{f_{k,i-2}}$};
    
    \fill (0.421,0.7473)  circle[radius=0.4pt];
    \fill[black] (0,1)  circle[radius=0.4pt];
    \fill[blue] (0,0.4)  circle[radius=0.4pt];
     \fill[red] (0,0.6)  circle[radius=0.4pt];
    \fill[black] (1,0.95)  circle[radius=0.4pt];
    \fill[red] (1,0.6)  circle[radius=0.4pt];
    \fill[blue] (1,0.4)  circle[radius=0.4pt];
    \draw[loosely dashed](0.421,0) -- (0.421,0.7473) node at (0.421,-0.1) {\footnotesize$p_j^*$};
    \draw[-](1,-0.02) -- (1,0.02) node at (1,-0.07) {\footnotesize$1$};
    \node at (-0.05,-0.05) {\footnotesize$0$};
\end{tikzpicture}
\caption{}
\label{fig:min of max}
\end{figure} 
\end{proposition}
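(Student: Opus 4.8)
The plan is to read everything off the likelihood ratios $r_{k,m}(\boldsymbol{p}'_j,\cdot)$ and the two monotonicity properties already proved in Proposition \ref{ratios are increasing}: each $r_{k,m}(\boldsymbol{p}'_j,\cdot)$ is increasing in $p_j$ (Part 1), and $r_{k,m}(\boldsymbol{p})$ is decreasing in the index $m$ (Part 2). Because the ratios decrease strictly through the index, unimodality \eqref{unimodality} says that the leading mode of the $k$-fold distribution at a given $p_j$ is exactly the largest $m$ with $r_{k,m}(\boldsymbol{p}'_j,p_j)\geq 1$. Thus the whole statement reduces to tracking where this single, ``decreasing-in-$m$ and increasing-in-$p_j$'' family of ratios crosses the level $1$, and to showing that this crossing stays pinned between the indices $i-1$ and $i$.

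First I would record the boundary values of the affine maps $f_{k,m}(\boldsymbol{p}'_j,\cdot)$ coming from the recursion \eqref{recursion}: $f_{k,m}(\boldsymbol{p}'_j,0)=f_{k-1,m}(\boldsymbol{p}'_j)$ and $f_{k,m}(\boldsymbol{p}'_j,1)=f_{k-1,m-1}(\boldsymbol{p}'_j)$, hence $r_{k,m}(\boldsymbol{p}'_j,0)=r_{k-1,m}(\boldsymbol{p}'_j)$ and $r_{k,m}(\boldsymbol{p}'_j,1)=r_{k-1,m-1}(\boldsymbol{p}'_j)$. Letting $m^{*}$ be the leading mode of the one-shorter distribution $f_{k-1,\cdot}(\boldsymbol{p}'_j)$, I would confine the mode of the full distribution to the pair $\{m^{*},m^{*}+1\}$ for every $p_j\in[0,1]$: monotonicity in $p_j$ gives $r_{k,m^{*}}(\boldsymbol{p}'_j,p_j)\geq r_{k,m^{*}}(\boldsymbol{p}'_j,0)=r_{k-1,m^{*}}(\boldsymbol{p}'_j)\geq 1$, forcing the mode to be $\geq m^{*}$, while $r_{k,m^{*}+2}(\boldsymbol{p}'_j,p_j)\leq r_{k,m^{*}+2}(\boldsymbol{p}'_j,1)=r_{k-1,m^{*}+1}(\boldsymbol{p}'_j)<1$ forces the mode to be $\leq m^{*}+1$.

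Next I would identify $m^{*}=i-1$. By hypothesis the leading mode at $p_j=p_j^{*}$ is $i$, so $i\in\{m^{*},m^{*}+1\}$, and by definition of $p_j^{*}$ we have $r_{k,i}(\boldsymbol{p}'_j,p_j^{*})=1$. If it were the case that $m^{*}=i$, then $r_{k,i}(\boldsymbol{p}'_j,p_j)\geq r_{k,i}(\boldsymbol{p}'_j,0)=r_{k-1,i}(\boldsymbol{p}'_j)\geq 1$ together with strict monotonicity in $p_j$ would force the crossing $r_{k,i}=1$ to occur only at $p_j=0$, contradicting $p_j^{*}\in(0,1)$; hence $m^{*}=i-1$ and the mode lives in $\{i-1,i\}$ for all $p_j$. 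Since $r_{k,i}(\boldsymbol{p}'_j,\cdot)$ is strictly increasing and equals $1$ at $p_j^{*}$, the leading mode equals $i$ precisely when $p_j\geq p_j^{*}$ and equals $i-1$ when $p_j<p_j^{*}$, which is exactly the three-case description of $\max_{m}f_{k,m}(\boldsymbol{p}'_j,p_j)$. The monotonicity labels of the figure then follow from the slope $\tfrac{d}{dp_j}f_{k,m}(\boldsymbol{p}'_j,p_j)=f_{k-1,m-1}(\boldsymbol{p}'_j)-f_{k-1,m}(\boldsymbol{p}'_j)$, which is positive exactly for $m\geq m^{*}+1=i$ (so $f_{k,i},f_{k,i+1}$ increase) and negative for $m\leq m^{*}=i-1$ (so $f_{k,i-1},f_{k,i-2}$ decrease).

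The main obstacle is precisely the global confinement of the mode to the single pair $\{i-1,i\}$. The ratios $r_{k,i-1}$ and $r_{k,i+1}$ are themselves increasing in $p_j$, so their values at the interior point $p_j^{*}$ do not on their own prevent the mode from drifting to $i-2$ or to $i+1$ near the endpoints of $[0,1]$. The resolution is to read the two controlling inequalities off the endpoints $p_j\in\{0,1\}$, where the $k$-fold distribution collapses onto the $(k-1)$-fold one (unshifted at $0$, shifted by one at $1$), and to propagate them inward by monotonicity in $p_j$; this is exactly where pinning $m^{*}=i-1$ does the work and where the hypothesis that $i$ is the leading mode at $p_j^{*}$ is used.
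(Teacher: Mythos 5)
Your proof is correct and rests on the same ingredients as the paper's own argument: the monotonicity of the likelihood ratios $r_{k,m}$ in $p_j$ and in the index $m$ (Proposition \ref{ratios are increasing}), combined with the endpoint collapse $f_{k,m}(\boldsymbol{p}'_j,0)=f_{k-1,m}(\boldsymbol{p}'_j)$ and $f_{k,m}(\boldsymbol{p}'_j,1)=f_{k-1,m-1}(\boldsymbol{p}'_j)$. Your packaging via confinement of the mode to $\{m^{*},m^{*}+1\}$ handles all indices at once a bit more systematically than the paper's pairwise endpoint-chaining (which it closes with ``the same argument works for the remaining''), but it is essentially the same proof, including the same implicit assumption that $p_j^{*}\in(0,1)$.
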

\begin{proof}
We will verify that $f_{k,i}$ is a linear equation on $p_j$ with positive slope, whereas $f_{k,i-1}$ has negative slope. This can be seen by using (\ref{ratios expanded}) and the monotonicity of $r_{k,i}$, note that $$\dfrac{f_{k-1,i}(\boldsymbol{p}_{j}')}{f_{k-1,i-1}(\boldsymbol{p}_{j}')}=r_{k,i}(\boldsymbol{p}_{j}',0)<r_{k,i}(\boldsymbol{p}_{j}',p_j^*)=1,$$ 
thus, $f_{k-1,i}(\boldsymbol{p}_{j}')<f_{k-1,i-1}(\boldsymbol{p}_{j}')$, then the slope of $f_{k,i}$ is positive, by \eqref{recursion}. Similarly, using that $r_{k,i}(\boldsymbol{p}_{j}',1)>r_{k,i}(\boldsymbol{p}_{j}',p_j^*)=1$, we conclude that the slope of $f_{k,i-1}$ is negative. 

To prove that $f_{k,i+1}(\boldsymbol{p}_{j}',p_j)<f_{k,i}(\boldsymbol{p}_{j}',p_j)$ for all $p_j\in (0,1)$ we observe that $f_{k,i+1}(\boldsymbol{p}_{j}',1)=f_{k-1,i}(\boldsymbol{p}_{j}')=f_{k,i}(\boldsymbol{p}_{j}',0)$ and the slope of $f_{k,i+1}$ as a linear function of $p_j$ is positive as shown in Figure \ref{fig:min of max}, since
$$\dfrac{f_{k-1,i+1}(\boldsymbol{p}_{j}')}{f_{k-1,i}(\boldsymbol{p}_{j}')}=r_{k,i+1}(\boldsymbol{p}_{j}',0)<r_{k,i}(\boldsymbol{p}_{j}',0)<r_{k,i}(\boldsymbol{p}_{j}',p_j^*)=1,$$ 
by Proposition \ref{ratios are increasing}.

Similarly, we have that $f_{k,i-1}(\boldsymbol{p}_{j}',p_j)>f_{k,i-2}(\boldsymbol{p}_{j}',p_j)$ for all $p_j\in (0,1)$  since $f_{k,i-2}(\boldsymbol{p}_{j}',0)=f_{k-1,i-2}(\boldsymbol{p}_{j}')=f_{k,i-1}(\boldsymbol{p}_{j}',1)$ and the slope of $f_{k,i-2}$ is negative since $r_{k,i-1}(\boldsymbol{p}_{j}',1)>r_{k,i}(\boldsymbol{p}_{j}',1)>r_{k,i}(\boldsymbol{p}_{j}',p_j^*)=1$. The same argument works for the remaining $\{f_{k,j}\}_{j=i+2}^k$ and $\{f_{k,j}\}_{j=0}^{i-3}$.
\end{proof}

As described in Figure \ref{fig:min of max}, this means that the intersection points $p_j^*$ are the locations of all the local minimum values of the function from Proposition \ref{maximum function around p_j^*}.
\begin{proposition}\label{minimizer is on the intersections}
The value $\displaystyle \inf_{\boldsymbol{p}\in[0,1]^k} \max_{0\leq i\leq k}\{f_{k,i}(\boldsymbol{p})\}$ must occur at parameter values where the mode is shared, this is, where two functions intersect. So, the minimizer parameter vector lies in $\boldsymbol{p}\in\bigcup_{i=1}^k \mathcal{P}_{k,i},$ where $\mathcal{P}_{k,i}:=\{\boldsymbol{p}\in(0,1)^k: f_{k,i}(\boldsymbol{p})=f_{k,i-1}(\boldsymbol{p})\}$.
\end{proposition}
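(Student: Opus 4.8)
The plan is to combine a compactness argument with the one–dimensional picture already established in Proposition \ref{maximum function around p_j^*}. First, since $[0,1]^k$ is compact and each $f_{k,i}$ is a polynomial in $\boldsymbol p$, the objective $\boldsymbol p\mapsto\max_{0\le i\le k}f_{k,i}(\boldsymbol p)$ appearing in \eqref{problem linf} is continuous and attains its infimum at some $\boldsymbol p^\ast\in[0,1]^k$. I would then argue by contradiction: assume the mode at $\boldsymbol p^\ast$ is \emph{not} shared, so by the unimodality \eqref{unimodality} there is a unique index $i$ with $f_{k,i}(\boldsymbol p^\ast)>f_{k,m}(\boldsymbol p^\ast)$ for every $m\neq i$, and look for a single coordinate along which the maximum can be strictly lowered.

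The core step is that the restriction of the objective to one coordinate is strictly $V$-shaped with its minimum exactly at a mode-sharing point. Fixing every coordinate of $\boldsymbol p^\ast$ except the $j$-th and writing $\phi_j(t):=\max_{0\le m\le k}f_{k,m}(\boldsymbol p'_j,t)$, with $\boldsymbol p'_j$ frozen at the corresponding entries of $\boldsymbol p^\ast$, Proposition \ref{maximum function around p_j^*} shows that $\phi_j$ coincides with the strictly decreasing branch $f_{k,i-1}$ for $t<p_j^\ast$ and with the strictly increasing branch $f_{k,i}$ for $t>p_j^\ast$, the slopes being nonzero because the ratios $r_{k,i}$ are strictly monotone (Proposition \ref{ratios are increasing}, via ultra log-concavity \eqref{ultra log-concave}). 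Hence $\phi_j$ has a unique minimizer at the mode-sharing abscissa $p_j^\ast$. Because $\boldsymbol p^\ast$ is a global minimizer, its $j$-th coordinate must minimize $\phi_j$; if the mode at $\boldsymbol p^\ast$ were unique then $(\boldsymbol p^\ast)_j\neq p_j^\ast$, so replacing $(\boldsymbol p^\ast)_j$ by $p_j^\ast$ would strictly decrease $\max_m f_{k,m}$, contradicting minimality. Thus the mode at $\boldsymbol p^\ast$ is shared, i.e. $f_{k,i}(\boldsymbol p^\ast)=f_{k,i-1}(\boldsymbol p^\ast)$ for some $i$, which is precisely the assertion $\boldsymbol p^\ast\in\bigcup_{i=1}^k\mathcal P_{k,i}$.

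The hard part will be making this local picture legitimate on the boundary of the cube, since Propositions \ref{ratios are increasing} and \ref{maximum function around p_j^*} presuppose $\boldsymbol p'_j\in(0,1)^{k-1}$. I would handle it by induction on $k$, excluding the faces first. On a face where some $p_j\in\{0,1\}$, the recursion \eqref{recursion} degenerates $f_{k,\cdot}$ into the convolution of the remaining $k-1$ Bernoulli factors, so the infimum over that face equals $C_{k-1,1}$, which by the inductive hypothesis is $\bar C_{k-1,1}$; since the all-equal diagonal is already an admissible competitor, $\inf_{[0,1]^k}\max_m f_{k,m}\le\bar C_{k,1}<\bar C_{k-1,1}$, the last inequality being a direct check on the formula of Lemma \ref{Key lemma todas iguales}. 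Hence no face contains the minimizer and $\boldsymbol p^\ast\in(0,1)^k$, so the one–variable analysis applies at every coordinate. A secondary point to verify is that for interior $\boldsymbol p'_j$ the sharing abscissa satisfies $p_j^\ast\in(0,1)$: as $t$ runs over $[0,1]$ the likelihood-ratio monotonicity of Proposition \ref{ratios are increasing} moves the mode of $f_{k,\cdot}(\boldsymbol p'_j,t)$ from that of the $(k-1)$-variable law up by exactly one, producing a single interior crossing, in agreement with the explicit expression for $p_j^\ast$ recorded before Proposition \ref{maximum function around p_j^*}.
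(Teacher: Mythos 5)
Your argument is correct and rests on the same mechanism the paper invokes: for frozen $\boldsymbol p'_j$ the coordinate-wise maximum $p_j\mapsto\max_{m}f_{k,m}(\boldsymbol p'_j,p_j)$ is strictly $V$-shaped with its unique minimum at the crossing abscissa $p_j^*$, so a minimizer whose mode is not shared could be strictly improved by moving a single coordinate. The difference is one of completeness rather than of route: the paper states the proposition with only a one-line appeal to the figure and the preceding coordinate-wise proposition, whereas you make explicit two points it leaves unaddressed. The first is attainment of the infimum by compactness, which is routine. The second is substantive: the coordinate-wise propositions are proved only for $\boldsymbol p'_j\in(0,1)^{k-1}$, and the set $\mathcal P_{k,i}$ is by definition contained in the open cube, so one really must exclude minimizers on the faces $p_j\in\{0,1\}$. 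Your induction on $k$ does this correctly, at the cost of restructuring the proof of Theorem \ref{thm: main theorem} as an induction (the value on a face is the $(k-1)$-variable problem, equal to $\bar C_{k-1,1}$ by the inductive hypothesis, and the diagonal competitor already achieves $\bar C_{k,1}$), and it hinges on the strict inequality $\bar C_{k,1}<\bar C_{k-1,1}$ that you defer to a direct check. That check does go through: for $k=2m$ one has $\binom{2m}{m}2^{-2m}=\binom{2m-1}{m-1}2^{-(2m-1)}$, so $\bar C_{2m,1}=\bar C_{2m-1,1}\bigl(1-\tfrac{1}{(2m+1)^2}\bigr)^{m}<\bar C_{2m-1,1}$, while for $k=2m+1$ the needed bound $\tfrac{2m+1}{2m+2}<\bigl(1-\tfrac{1}{(2m+1)^2}\bigr)^{m}$ follows from Bernoulli's inequality since $m(2m+2)<(2m+1)^2$. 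Your closing observation that the crossing $p_j^*$ actually exists in the interior (because the mode shifts from that of the $(k-1)$-variable law at $p_j=0$ to one higher at $p_j=1$) is also a gap in the paper's exposition worth filling, since the preceding proposition presupposes a shared-mode point. In short: same approach, but your write-up is a complete proof where the paper's is a sketch.
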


\subsubsection{Some auxiliary results}
The next theorem is split into two parts,
the first part was proved by Newton, see \cite[Section 2.22, Theorem 51]{HLP} or \cite[Section 4.3]{HLP} for an elementary proof, this establish the classic relation between a sequence of positive numbers $\{a_j\}$ satisfying a \textit{Newton's inequality} and real-rooted polynomials with real coefficients. The second part shows the relation of such inequalities with Poisson distributions, first observed by Aissen, Schoenberg, and Whitney in \cite{ASW}, see \cite[Theorem 4.1]{TANG}.
\begin{theorem}
\begin{enumerate}
    \item  Let $\{a_j\}_{j=0}^{k}$ be a finite sequence of real numbers such that the generating polynomial $\displaystyle P(z)=\sum_{j=0}^{k}a_jz^j$ has only real roots. Then
 \begin{equation}\label{Newton's inequality}
 \left(\dfrac{a_i}{\binom{k}{i}}\right)^2\geq \dfrac{a_{i-1}}{\binom{k}{i-1}}\dfrac{a_{i+1}}{\binom{k}{i+1}},    
 \end{equation}
This is, any real rooted polynomial with real coefficients only (not necessarily positive) satisfy the Newton's inequality (\ref{Newton's inequality}).\\
\item Additionally, if we impose the condition that the sequence $\{a_j\}_{j=0}^{k}$ is made of non-negative numbers. Then, the following are equivalent:
 \begin{itemize}
     \item The polynomial $\displaystyle P(z)=\sum_{j=0}^{k}a_jz^j$ has only real roots.\\
     
     \item The sequence $\{a_j\}_{j=0}^{k}$ is \textit{log-concave}, and unimodal.\\
     
      \item The sequence $\dfrac{a_1}{P(1)},\dots,\dfrac{a_k}{P(1)}$ is the probability distribution of a Poisson Binomial distribution with success probabilities $\boldsymbol{p}=(p_1,\dots,p_k)$. 
 \end{itemize}
 \end{enumerate}
\end{theorem}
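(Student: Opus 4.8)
\emph{Plan for Part 1 (Newton's inequality).} The plan is to reduce the claim for the three consecutive coefficients $a_{i-1},a_i,a_{i+1}$ to the discriminant of a single quadratic, using that real-rootedness is preserved under two elementary operations: differentiation $P\mapsto P'$ (by Rolle's theorem every zero of $P'$ lies between consecutive real zeros of $P$) and reversal $P\mapsto z^{k}P(1/z)$ (which inverts the nonzero roots and drops the degree at roots equal to $0$). First I would differentiate $P$ exactly $i-1$ times, which annihilates $a_0,\dots,a_{i-2}$ and leaves a real-rooted polynomial of degree $k-i+1$ whose bottom coefficient is a positive multiple of $a_{i-1}$. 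Then I would reverse and differentiate a further $k-i-1$ times, annihilating the coefficients above $a_{i+1}$ and arriving at a real-rooted quadratic $b_0+b_1 z+b_2 z^{2}$ whose coefficients are explicit constant multiples of $a_{i-1},a_i,a_{i+1}$. A real quadratic has real roots iff $b_1^{2}\geq 4b_0b_2$, and carrying the differentiation factors through turns this discriminant inequality into exactly \eqref{Newton's inequality}. The only genuine work is the bookkeeping of the factorials and binomials produced by repeated differentiation and reversal; the signs are harmless, since the discriminant inequality holds for real coefficients of either sign. This is the classical argument of Newton recorded in \cite[Section 2.22]{HLP}.

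\emph{Plan for Part 2, the core equivalence (i)$\Leftrightarrow$(iii).} Here I would first observe that a polynomial with nonnegative coefficients, not all zero, is strictly positive at every $z>0$, so real-rootedness forces all roots to be real and $\leq 0$. Writing each root as $-r_j$ with $r_j\geq 0$ gives a factorization $P(z)=a_k\prod_{j}(z+r_j)$, and since $P(1)=\sum_j a_j>0$, normalizing yields
\[
\frac{P(z)}{P(1)}=\prod_{j=1}^{k}\Big((1-p_j)+p_j z\Big),\qquad p_j:=\frac{1}{1+r_j}\in[0,1],
\]
with the convention that a degree drop ($a_k=0$) contributes factors with $p_j=0$ and a root at $0$ ($a_0=0$) contributes $p_j=1$. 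The right-hand side is precisely the probability generating function of a Poisson Binomial distribution with success probabilities $p_1,\dots,p_k$, giving (i)$\Rightarrow$(iii); conversely such a product has only the real roots $z=-(1-p_j)/p_j$, so (iii)$\Rightarrow$(i) is immediate.

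\emph{Plan for (i)$\Rightarrow$(ii) and the main obstacle.} To obtain log-concavity from (i) I would combine Part 1 with the elementary log-concavity of the binomial coefficients, $\binom{k}{i}^{2}\geq\binom{k}{i-1}\binom{k}{i+1}$: since the $a_j$ are nonnegative, \eqref{Newton's inequality} gives $a_i^{2}\geq \frac{\binom{k}{i}^{2}}{\binom{k}{i-1}\binom{k}{i+1}}\,a_{i-1}a_{i+1}\geq a_{i-1}a_{i+1}$, i.e. log-concavity \eqref{log-concave}. Unimodality then follows because the factorization above shows the support $\{j:a_j>0\}$ is an interval (no internal zeros), and a nonnegative log-concave sequence with contiguous support is unimodal. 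The delicate point, and the step I expect to be the true obstacle, is recovering (i) from (ii): two-term log-concavity plus unimodality is only the $2\times 2$ consecutive-minor condition and does \emph{not} by itself force real roots (for instance the uniform sequence $1,1,1$ is log-concave and unimodal, yet $1+z+z^{2}$ is not real-rooted). Thus the nontrivial direction genuinely rests on the total-positivity characterization of P\'olya frequency sequences due to Aissen–Schoenberg–Whitney \cite{ASW} (see \cite[Theorem 4.1]{TANG}), namely that a finite nonnegative sequence has a real-rooted generating polynomial iff its associated Toeplitz matrix is totally nonnegative; I would close the cycle by invoking that result, which supplies the substantive input and, implicitly, the reading of ``log-concave'' as the full P\'olya frequency (all-minor) condition rather than the two-term one.
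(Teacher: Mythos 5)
The paper does not actually prove this theorem: it is imported from the literature with pointers to \cite{HLP} for Newton's inequality and to \cite{ASW}, \cite[Theorem 4.1]{TANG} for the probabilistic characterization, so there is no in-paper argument to compare yours against. Your plan for Part 1 is the classical Newton argument (differentiate $i-1$ times, reverse, differentiate $k-i-1$ times, and read off the discriminant of the resulting real-rooted quadratic), which is exactly the proof in \cite[Section 2.22]{HLP} and is sound, including the observation that no positivity of the $a_j$ is needed. Your treatment of (i)$\Leftrightarrow$(iii) --- all roots of a nonnegative-coefficient real-rooted polynomial are nonpositive, so $P(z)/P(1)=\prod_j\bigl((1-p_j)+p_jz\bigr)$ with $p_j=1/(1+r_j)$, which is precisely the probability generating function of a Poisson Binomial --- is correct and is the substantive content the paper actually uses.

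The most valuable part of your proposal is the obstacle you isolate: the implication (ii)$\Rightarrow$(i) is false as the theorem is literally stated. Your counterexample $1,1,1$ is valid ($1^2\geq 1\cdot 1$, the sequence is unimodal, yet $1+z+z^2$ has no real roots), so two-term log-concavity plus unimodality cannot be equivalent to real-rootedness; the correct second bullet must be the P\'olya frequency (totally nonnegative Toeplitz matrix) condition of Aissen--Schoenberg--Whitney, as you say. This is a genuine defect in the statement as printed rather than a gap in your argument. It is worth noting that the flaw is harmless for the rest of the paper: downstream, the authors only use the chain ``Poisson Binomial $\Rightarrow$ real-rooted generating polynomial $\Rightarrow$ Newton's inequality for the difference sequence $D_{k,j}$,'' i.e.\ (iii)$\Rightarrow$(i)$\Rightarrow$Part 1, and never the direction (ii)$\Rightarrow$(i). (Two further small slips in the printed statement that your factorization implicitly corrects: the normalized sequence in the third bullet should start at $a_0/P(1)$, not $a_1/P(1)$, and one must adopt the convention $p_j=0$ for a degree drop and $p_j=1$ for a root at the origin, exactly as you do.)
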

We know that $\{f_{k,j}(\boldsymbol{p})\}_{j=0}^{k}$ are the probability distributions of a Poisson Binomial distribution, therefore the generating polynomial $\displaystyle P(z)=\sum_{j=0}^{k}f_{k,j}(\boldsymbol{p}) z^j$, on the complex variable $z$ is real rooted. A direct consequence of this is the fact that the sequence of successive differences $D_{k,j}=f_{k,j}-f_{k,j-1}$ for all $1\leq j\leq k$, satisfies the Newton's inequality (\ref{Newton's inequality}). This is because if we consider the generating polynomial \begin{align*}
  Q(z)&=\sum_{j=0}^{k} D_{k,j}(\boldsymbol{p})z^j\\
  &=\sum_{j=0}^{k} (f_{k,j}(\boldsymbol{p})-f_{k,j-1}(\boldsymbol{p}))z^j\\
  &=(1-z)\sum_{j=0}^{k} f_{k,j}(\boldsymbol{p})z^j.
\end{align*}
This shows that $Q(z)$ is also a real-rooted polynomial with real coefficients $\{D_{k,j}(\boldsymbol{p})\}_{j=0}^{k}$ obeying \begin{equation}\label{Newton's inequalities D}
\Big( D_{k,i}(\boldsymbol{p})\Big)^2\geq \left(\dfrac{i+1}{i}\right)\left(\dfrac{k-i+1}{k-i}\right) D_{k,i-1}(\boldsymbol{p})\cdot D_{k,i+1}(\boldsymbol{p}_{12}')
\end{equation}
for all $1\geq k$ and all $1\leq i\leq k-1$, as given by (\ref{Newton's inequality}). \\

Note that $\left(\frac{i+1}{i}\right)\left(\frac{k-i+1}{k-i}\right)>1$ for all $1\leq i\leq k-1$, so the Newton's inequalities $\Big( D_{k,i}(\boldsymbol{p})\Big)^2> D_{k,i-1}(\boldsymbol{p}) D_{k,i+1}(\boldsymbol{p}_{12}')$ are strict.

Lastly, note that the sequence $\{D_{k,j}(\boldsymbol{p})\}_{j=0}^{k}$ satisfies the same recursive relation (\ref{recursion}), this is \begin{equation}\label{recursion Ds}
D_{k,i}(\boldsymbol{p})= (1-p_j)D_{k-1,i}(\boldsymbol{p}_j')+(p_j)D_{k-1,i-1}(\boldsymbol{p}_j'),
\end{equation}
valid for all $1\leq i,j\leq k.$ 

\subsubsection{Lagrange Multipliers and conclusion of the argument.}\label{Lagrange multipliers}
By Proposition \ref{minimizer is on the intersections}, we have reduced our original problem, to solve a minimization problem on the restricted set of parameters $\bigcup_{i=1}^k \mathcal{P}_{k,i}.$
\begin{theorem}\label{Lagrange Multipliers}
Let $\mathcal{P}_{k,i}:=\{\boldsymbol{p}\in(0,1)^k: f_{k,i}(\boldsymbol{p})=f_{k,i-1}(\boldsymbol{p})\}$ to be the $(k-1)-$dimensional algebraic manifold of parameters that make $f_{k,i}$ to intersect $f_{k,i-1}$. 

The minimum value of $f_{k,i}(\boldsymbol{p})$ on $\mathcal{P}_{k,i}$ occurs when $p_1=p_2=\ldots=p_k$ for each $1\leq i \leq k$.  In other words, when $f_{k,i}(\boldsymbol{p}):=(f*f*\ldots*f)(i)$.
\end{theorem}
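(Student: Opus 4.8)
The plan is to treat this as a constrained optimization and attack it with Lagrange multipliers, exactly as the section heading suggests, deferring the real work to a two–variable Möbius argument powered by the strict Newton inequalities \eqref{Newton's inequalities D}. Since on $\mathcal{P}_{k,i}$ one has $f_{k,i}=f_{k,i-1}$, i.e. $D_{k,i}=0$, I would minimize $f_{k,i}$ subject to the single constraint $D_{k,i}(\boldsymbol{p})=0$ on the open cube $(0,1)^k$. Differentiating the recursion \eqref{recursion} in $p_j$ (noting $\boldsymbol{p}_j'$ is independent of $p_j$) yields the clean expressions
\begin{equation*}
\partial_{p_j} f_{k,i}(\boldsymbol{p})=-D_{k-1,i}(\boldsymbol{p}_j'),\qquad \partial_{p_j} D_{k,i}(\boldsymbol{p})=D_{k-1,i-1}(\boldsymbol{p}_j')-D_{k-1,i}(\boldsymbol{p}_j').
\end{equation*}
The stationarity condition $\nabla f_{k,i}=\lambda\,\nabla D_{k,i}$ then reads $-D_{k-1,i}(\boldsymbol{p}_j')=\lambda\big(D_{k-1,i-1}(\boldsymbol{p}_j')-D_{k-1,i}(\boldsymbol{p}_j')\big)$ for every $j$, which rearranges to the statement that the ratio $D_{k-1,i}(\boldsymbol{p}_j')/D_{k-1,i-1}(\boldsymbol{p}_j')$ is one and the same constant for all $1\le j\le k$.

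Next I would show this common–ratio condition forces all coordinates to coincide. Fix two indices, say $1$ and $2$, write $\boldsymbol{p}_{12}'$ for $\boldsymbol{p}$ with both removed, and set $\delta_m(t):=D_{k-1,m}(\boldsymbol{p}_{12}',t)$. By the recursion \eqref{recursion Ds} each $\delta_m$ is affine in $t$,
\begin{equation*}
\delta_m(t)=D_{k-2,m}(\boldsymbol{p}_{12}')+t\big(D_{k-2,m-1}(\boldsymbol{p}_{12}')-D_{k-2,m}(\boldsymbol{p}_{12}')\big),
\end{equation*}
so $t\mapsto \delta_i(t)/\delta_{i-1}(t)$ is a Möbius transformation. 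Since $\boldsymbol{p}_1'=(\boldsymbol{p}_{12}',p_2)$ and $\boldsymbol{p}_2'=(\boldsymbol{p}_{12}',p_1)$, the common–ratio condition for $j=1,2$ says precisely that this Möbius map takes the same value at $t=p_1$ and at $t=p_2$. The crux is that the map is non-degenerate, hence injective: a direct computation of its coefficient determinant collapses, after telescoping the differences, to
\begin{equation*}
D_{k-2,i}(\boldsymbol{p}_{12}')\,D_{k-2,i-2}(\boldsymbol{p}_{12}')-D_{k-2,i-1}(\boldsymbol{p}_{12}')^2,
\end{equation*}
which is strictly negative by the strict Newton inequality \eqref{Newton's inequalities D} applied to the $(k-2)$-variable difference sequence at index $i-1$. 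A Möbius map with nonzero determinant is injective on any interval avoiding its pole, so equality of values at $p_1,p_2$ forces $p_1=p_2$; running the argument over all pairs gives $p_1=\cdots=p_k$.

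The hard part will be the analytic bookkeeping around this clean algebraic core. First, I must verify the constraint qualification ($\nabla D_{k,i}\neq 0$) and dispose of the degenerate alternatives in the Lagrange step—namely $\lambda=1$ and vanishing denominators $D_{k-1,i-1}(\boldsymbol{p}_j')=0$—by exploiting the sign and monotonicity structure already recorded in Proposition \ref{ratios are increasing}, where the same ultra-log-concavity controls the relevant differences, while the extreme indices $i$ near $0$ or $k$ (where \eqref{Newton's inequalities D} is vacuous) must be checked by hand or shown to give empty $\mathcal{P}_{k,i}$. Second, $\mathcal{P}_{k,i}$ is a non-compact manifold, so I must confirm the infimum is genuinely attained at the diagonal critical point rather than escaping toward $\partial[0,1]^k$; I would argue on the closure, observing that limit points with some $p_j\in\{0,1\}$ reduce to a Poisson Binomial in fewer variables and can be dispatched by induction on $k$ together with the monotonicity in \eqref{ratios f}. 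Finally, I would confirm the stationary point is a minimum rather than a saddle, either by a second-order test or, more attractively, by the same two-variable reduction: showing that symmetrizing any unequal pair $(p_1,p_2)$ along the constraint strictly decreases $f_{k,i}$ simultaneously establishes minimality and existence without invoking second derivatives.
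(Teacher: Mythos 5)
Your proposal follows essentially the same route as the paper: Lagrange multipliers on the constraint $f_{k,i}=f_{k,i-1}$, the identity $\partial_{p_j}f_{k,i}=-D_{k-1,i}(\boldsymbol{p}_j')$, the resulting common-ratio condition across all $j$, and the two-variable M\"obius-transformation injectivity argument whose non-degeneracy is exactly the strict Newton inequality for the difference sequence $D_{k-2,\cdot}(\boldsymbol{p}_{12}')$. The caveats you flag at the end (constraint qualification, non-compactness of $\mathcal{P}_{k,i}$, and second-order verification) are real loose ends, but they are left equally implicit in the paper's own proof, which handles only the degenerate case $\lambda=0$ explicitly.
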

\begin{proof}
We use the Lagrange multiplier method, to minimize the values of $\displaystyle f_{k,i}(\boldsymbol{p})$, under the constraint set of parameters $\mathcal{P}_{k,i}:=\{\boldsymbol{p}\in(0,1)^k: f_{k,i}(\boldsymbol{p})=f_{k,i-1}(\boldsymbol{p})\}$. This is achieved at the minimum value of $f_{k,i}$ evaluated at all the solutions $(\boldsymbol{p},\lambda)\in \mathcal{P}_{k,i}\times \mathbb{R}$ satisfying \begin{equation}\label{Lagrange multiplier eqtn}
    \nabla f_{k,i}=\lambda \nabla(f_{k,i}-f_{k,i-1})
\end{equation}
where $\lambda\in \mathbb{R}$ is a constant and the gradient is taking over the parameters $(p_1,\dots,p_k).$
Each function $f_{k,i}(\boldsymbol{p})$ is differentiable on $p_{j}$ and its derivative is given by \begin{equation}\label{derivative on pj}
        \frac{\partial}{\partial p_{j}}\big( f_{k,i}(\boldsymbol{p})\big)=f_{k-1,i-1}(\boldsymbol{p}_j')-f_{k-1,i}(\boldsymbol{p}_j')=-D_{k-1,i}(\boldsymbol{p}_j'),
    \end{equation}
this follows immediately from (\ref{recursion}). Then
\begin{align*}
  \frac{\partial}{\partial p_{j}}\big( f_{k,i}(\boldsymbol{p})-f_{k,i-1}(\boldsymbol{p})\big)&= -D_{k-1,i}(\boldsymbol{p}_j')+D_{k-1,i-1}(\boldsymbol{p}_j')
\end{align*}
Since (\ref{Lagrange multiplier eqtn}) can be written as the system of equations
\begin{equation}\label{eq: equiv system}
\frac{\partial}{\partial p_{j}}\big( f_{k,i}(\boldsymbol{p})\big)=\lambda\ \frac{\partial}{\partial p_{j}}\big( f_{k,i}(\boldsymbol{p})-f_{k,i-1}(\boldsymbol{p})\big),
\end{equation}
for $1\leq j\leq k$. By the previous calculations, this is equivalent to
 \begin{equation}\label{lambda}
\frac{1}{\lambda}=\dfrac{D_{k-1,i}(\boldsymbol{p}_j')-D_{k-1,i-1}(\boldsymbol{p}_j')}{D_{k-1,i}(\boldsymbol{p}_j')},
\end{equation}
for all $1\leq j\leq k$. A solution to the Lagrange multiplier problem exists only if $\lambda$ remains constant independently of the $p_j$ that is been excluded. Our goal here is to prove that this can only happen when $p_1=p_2\dots=p_k$. We start by observing that $\lambda\neq 0$, otherwise, by \eqref{eq: equiv system} and \eqref{derivative on pj}, we have $D_{k-1,i}(\boldsymbol{p}_j')=0$ for any $1\leq j\leq k$, then for $\boldsymbol{p}\in\mathcal{P}_{k,i}$ we get
\begin{align*}
    0&=f_{k,i}(\boldsymbol{p})
-f_{k,i-1}(\boldsymbol{p})=(1-p_j)D_{k-1,i}(\boldsymbol{p}_j')+(p_j)D_{k-1,i-1}(\boldsymbol{p}_j')
\end{align*}
so $D_{k-1,i-1}(\boldsymbol{p}_j')=0$ as well, this implies that $f_{k-1,i-2}(\boldsymbol{p}_j')=f_{k-1,i-1}(\boldsymbol{p}_j')=f_{k-1,i}(\boldsymbol{p}_j')$, and this is not possible because the sequence is unimodal, meaning that at most two of the functions can be equal at the same time. From  \eqref{lambda} we obtain 
\begin{equation}\label{ratios lambda}
\dfrac{\lambda-1}{\lambda} :=\dfrac{D_{k-1,i-1}(\boldsymbol{p}_j')}{D_{k-1,i}(\boldsymbol{p}_j')},
\end{equation}
for all $1\leq j\leq k$. We claim that, if $p_{j_1}\neq p_{j_2}$, then $\dfrac{D_{k-1,i-1}(\boldsymbol{p}_{j_1}')}{D_{k-1,i}(\boldsymbol{p}_{j_1}')}\neq \dfrac{D_{k-1,i-1}(\boldsymbol{p}_{j_2}')}{D_{k-1,i}(\boldsymbol{p}_{j_2}')}$.
Assume that $p_1\neq p_2$, then $\boldsymbol{p}_{1}'\neq\boldsymbol{p}_2'$. We denote by $\boldsymbol{p}_{12}'$ the vector obtain from $\boldsymbol{p}$ after removing the coordinates $\{p_1,p_2\}$. We note that the expression (\ref{ratios lambda}) is a ratio of two consecutive functions just as (\ref{ratios f}), and moreover, the functions $D_{k,i}$ satisfy the recursive relation (\ref{recursion Ds}) which is the same that $f_{k,i}$ satisfies, thus by expanding the recursions we get
$$\dfrac{\lambda-1}{\lambda}=\dfrac{(p_2)\Big({D_{k-2,i-2}(\boldsymbol{p}_{12}')-D_{k-2,i-1}(\boldsymbol{p}_{12}')}\Big)+D_{k-2,i-1}(\boldsymbol{p}_{12}')}{(p_2)\Big(D_{k-2,i-1}(\boldsymbol{p}_{12}')-D_{k-2,i}(\boldsymbol{p}_{12}')\Big)+D_{k-2,i}(\boldsymbol{p}_{12}')},$$
where we have a fraction of two linear terms on $p_2$ and every difference $D_{k-2,j}(\boldsymbol{p}_{12}')$ is independent of $p_2$. We can repeat this process for $\boldsymbol{p}'_2$ and obtain analogous expressions by collecting the linear coefficients of $p_1$ $$\dfrac{\lambda-1}{\lambda}=\dfrac{(p_1)\Big({D_{k-2,i-2}(\boldsymbol{p}_{12}')-D_{k-2,i-1}(\boldsymbol{p}_{12}')}\Big)+D_{k-2,i-1}(\boldsymbol{p}_{12}')}{(p_1)\Big(D_{k-2,i-1}(\boldsymbol{p}_{12}')-D_{k-2,i}(\boldsymbol{p}_{12}')\Big)+D_{k-2,i}(\boldsymbol{p}_{12}')}.$$
This suggests the following definition 
 $$\Lambda(y):=\dfrac{y\Big({D_{k-2,i-2}(\boldsymbol{p}_{12}')-D_{k-2,i-1}(\boldsymbol{p}_{12}')}\Big)+D_{k-2,i-1}(\boldsymbol{p}_{12}')}{y\Big(D_{k-2,i-1}(\boldsymbol{p}_{12}')-D_{k-2,i}(\boldsymbol{p}_{12}')\Big)+D_{k-2,i}(\boldsymbol{p}_{12}')},$$
 for parameters $\boldsymbol{p}_{12}'$ fixed, this is a Mobius transformation on the variable $y$, meaning that it is either injective for all $y$ in its domain or identically constant if and only if $$\dfrac{D_{k-2,i-1}(\boldsymbol{p}_{12}')-D_{k-2,i}(\boldsymbol{p}_{12}')}{D_{k-2,i}(\boldsymbol{p}_{12}')}=\dfrac{D_{k-2,i-2}(\boldsymbol{p}_{12}')-D_{k-2,i-1}(\boldsymbol{p}_{12}')}{D_{k-2,i-1}(\boldsymbol{p}_{12})},$$
 equivalently
 $$\dfrac{D_{k-2,i-1}(\boldsymbol{p}_{12}')}{D_{k-2,i}(\boldsymbol{p}_{12}')}-1=\dfrac{D_{k-2,i-2}(\boldsymbol{p}_{12}')}{D_{k-2,i-1}(\boldsymbol{p}_{12})}-1.$$
or we can also write it as $$\Big( D_{k-2,i-1}(\boldsymbol{p}_{12}')\Big)^2=D_{k-2,i}(\boldsymbol{p}_{12}')\cdot D_{k-2,i-2}(\boldsymbol{p}_{12}').$$
But this equality is false by the strict Newton's inequality of the differences $\{D_{k-2,j}(\boldsymbol{p}_{12}'))\}_{j=1}^{k-2}$ as shown in (\ref{Newton's inequalities D}). This argument proves that $\Lambda(y)$ is a non-constant Mobius transformation, therefore injective on its domain, this concludes that $\Lambda(p_1)= \Lambda(p_2)=\dfrac{\lambda-1}{\lambda}$ if and only if $p_1=p_2$.

Recalling that we picked $p_1$ and $p_2$ arbitrarily, by iterating the same argument with all different choices for pairs, we obtain that the only solution for the Lagrange Multiplier problem is $p_1=p_2=\dots=p_k$ and $\lambda$ given by (\ref{lambda}), this solution gives the minimum value for $f_{k,i}(\boldsymbol{p})$ on the restricted set of parameters $\mathcal{P}_{k,i}$.
\end{proof}
Now we are all ready to finish the proof for Theorem \ref{thm: main theorem}. By the Proposition \ref{minimizer is on the intersections} the $\displaystyle\inf_{\boldsymbol{p}\in[0,1]^{k}} \max_{0\leq i\leq k}\{f_{k,i}(\boldsymbol{p}) \}$ is attained at $\boldsymbol{p}\in\bigcup_{i=1}^k \mathcal{P}_{k,i}$, and then by Theorem \ref{Lagrange Multipliers} the infimum must occur for some $\boldsymbol{p}$ that also satisfies $p_1=p_2=\dots=p_k$. Therefore,
\begin{align*}
 \displaystyle\inf_{\boldsymbol{p}\in[0,1]^{k}} \max_{0\leq i\leq k}\{f_{k,i}(\boldsymbol{p}) \}&=\displaystyle\inf_{\boldsymbol{p}\in\bigcup_{i}\mathcal{P}_{k,i}} \max_{0\leq i\leq k}\{f_{k,i}(\boldsymbol{p}) \}\\
 &=\displaystyle\inf_{p\in[0,1]} \max_{0\leq i\leq k}\{(f*f*\ldots*f)(i): f(1)=p \}   
\end{align*}
which is the reduction to Lemma \ref{Key lemma todas iguales}.
\end{proof}

\section{Acknowledgements.}
J.M. was partially supported by the AMS Stefan Bergman Fellowship and the Simons Foundation Grant $\# 453576$. J.M. is thankful to Zane Li for interesting discussion.


\end{document}